\def\be{\kern -.1em}
\def\lbe{\kern -.025em}
\newcommand {\enm} {\ensuremath}
\newcommand{\inj}{\hookrightarrow}
\newcommand{\dd}{\delta}
\renewcommand{\AA}{\mathbb{A}}
\newcommand{\GG}{\mathbb{G}}
\newcommand{\NN}{\mathbb{N}}
\newcommand{\QQ}{\mathbb{Q}}
\newcommand{\WW}{\mathbb{W}}
\newcommand{\ZZ}{\mathbb{Z}}
\newcommand{\Ou}{\enm{\mathcal{O}}}
\newcommand{\form}{\mathrm{for}}
\newcommand{\bx}{\mathbf{x}}
\newcommand{\bxx}{\mathbf{x}}
\newcommand{\bp}{\mathbf{p}}
\newcommand{\sfp}{\mathsf{p}}
\newcommand{\bb}[1]{\mathbb{#1}}
\newcommand{\mcal}[1]{\mathcal{#1}}
\newcommand{\hG}{\widehat{\mathbb{G}}_{\mathrm{a}}}
\newcommand{\hGm}{\widehat{\mathbb{G}}_{\mathrm{m}}}
\newcommand{\C}[2]{{\binom{#1}{#2}}}
\DeclareMathOperator{\Spec}{\mathrm{Spec}}
\DeclareMathOperator{\Spf}{\mathrm{Spf}}
\newcommand{\Hom}{\mathrm{Hom}}
\newcommand{\Alg}[1]{{\mathbf{Alg}_{#1}}}
\newcommand{\aAlg}[1]{{\mathbf{Alg}^+_{#1}}}
\newcommand{\faAlg}[1]{{\mathbf{fAlg}^+_{#1}}}
\newcommand{\Nilp}{\mathbf{Nilp}}
\newcommand{\map}{\rightarrow}
\newcommand{\stk}{\stackrel}
\newcommand{\by}{{\bf y}}
\newcommand{\bz}{{\bf z}}
\newcommand{\ba}{{\bf a}}
\newcommand{\mfrak}[1]{\mathfrak{#1}}
\newcommand{\frakf}{\mathfrak{f}}
\newcommand{\tW}{W^+} %%%\tilde{W}
\newcommand{\pprod}{{\prod}^+}
\newcommand{\pprodn}{{\prod}^+_n} 
\newcommand{\prodphi}{{\prod}_\phi} 
\newcommand{\tHom}{\mathrm{Hom}_{\aAlg{R}} }
\newcommand{\ftHom}{\mathrm{Hom}_{\faAlg{R}} }
\newcommand{\wh}{\widehat}
 \numberwithin{equation}{section}
 \newtheorem{theorem}[equation]{\bf{Theorem}} 
 \newtheorem{lemma}[equation]{\bf Lemma}
 \newtheorem{corollary}[equation]{\bf Corollary}
 \newtheorem*{theorem*}{Theorem} 
 \newtheorem*{maintheorem*}{Main Theorem}%% with no number
 \theoremstyle{definition}
 \newtheorem {example}[equation]{Example}
 \newtheorem {examples}[equation]{Examples}
 \newtheorem {remark}[equation]{Remark}
\title[Canonical Witt formal scheme extensions]{Canonical Witt formal scheme extensions and $p$-torsion groups}
\date{\today}
 \author{Alessandra Bertapelle$^1$ %\orcidlink{https://orcid.org/0000-0002-0708-3975}
 }
\address{$^1$Universit\`a degli Studi di Padova, Dipartimento di Matematica ``Tullio Levi-Civita'', via Trieste 63, I-35121 Padova, Italy}
\email{alessandra.bertapelle@unipd.it}
 \author{Nicola Mazzari$^1$ %\orcidlink{https://orcid.org/0000-0002-1390-5189}
 }
\email{nicola.mazzari@unipd.it}
 \author{Arnab Saha$^2$
 	% \orcidlink{0000-0002-9932-7569} %to be used
 }
\address{$^2$Indian Institute of Technology Gandhinagar, Gujarat 382355}
\email{arnab.saha@iitgn.ac.in}
\subjclass[2010]{Primary 13F35, 14L05, 14L15, 14B20, 14G20.}
\keywords{Witt vectors, arithmetic jet spaces, $\pi$-derivation, group schemes}
\begin{document}
\baselineskip 15pt

\begin{abstract}
We study the $n$-th arithmetic jet space of the $p$-torsion subgroup attached to a smooth commutative formal group scheme. We show that the $n$-th jet space above fits in the middle of a canonical short exact sequence between a power of the formal scheme of Witt vectors of length $n$ and the $p$-torsion subgroup we started with. This result generalizes a result of Buium on roots of unity. 
\end{abstract}

\maketitle

\section{Introduction} 
\label{sec:introduction}

Buium in \cite{bui95} introduced the theory of arithmetic jet spaces on (formal) abelian schemes over $p$-adic rings and showed that the jet spaces of an abelian scheme $A$ are
naturally affine fibrations over $A$. 
Since then the theory of arithmetic jet spaces has been developed in several articles such as \cite{Barc}, \cite{bui00}, \cite{Buium-Miller}, \cite{borsah19a}, \cite{borsah19}, \cite{Hurl}, and has found remarkable applications in diophantine geometry as in \cite{bui95} and \cite{BP}.

In this paper, we study the structure of the jet space functors
 associated to the $p$-torsion subgroup 
 $G[p^\infty]$ of a smooth commutative formal group scheme $G$ over a fixed $p$-adic basis.
Here we show that for any $n$, the $n$-th jet space $J^n(G[p^\infty])$ is canonically an extension of $G[p^\infty]$ by a power of the unipotent formal group scheme $\wh{\WW}_{n-1}$, where $\wh{\WW}_{n-1}$ is $\wh{\bb{A}}^n$, the $n$-dimensional formal affine space endowed with the group scheme structure of the additive Witt vectors of length $n$. This generalizes results obtained by Buium in \cite{bui13} for $G$ the multiplicative group scheme.

Before stating our main result in detail, let us introduce some notation. Let $K$ be a finite extension of $\QQ_p$ with ramification index $e$, uniformizer $\pi$ and ring of integers $\Ou$. We denote by $k$ the residue field of $\Ou$ and let $q$ be its order. Then the identity map of $\Ou$ is a lift of $q$-Frobenius. Fix a $\pi$-adically complete $\pi$-torsion free
$\Ou$-algebra $R$ with a lifting of Frobenius $\phi$, i.e., an endomorphism of $R$ such that $\phi(r)-r^q\in \pi R$ for all $r\in R$. As an example, consider the ring of restricted power series $\Ou\langle x \rangle$ with $\phi$ the $\Ou$-algebra endomorphism given by $\phi(x) = x^q$.
Let $W_n$ be the functor of ramified Witt vectors of length $n+1$ (following Borger's convention, details in \S~\ref{s.wittvectors}).

Let $\Nilp_R$ be the category of $R$-algebras on which $\pi$ (or equivalently, $p$) is nilpotent. Its opposite category is a site with respect to the Zariski topology. Any adic $R$-algebra $A$ with ideal of definition $I$ containing $\pi$ gives rise to a sheaf (of sets) $\Spf(A)$ such that
\[
\Spf(A)(B)=\varinjlim_n \Hom_R(A/I^n,B)\ , 
\]
for any $B$ in $\Nilp_R$.
By a formal scheme over $R$ we mean a sheaf on $\Nilp_R^{\rm op}$ admitting an open cover by open subfunctors of the type $\Spf(A)$ for $A$ as above.

Given a sheaf $X$ on $\Nilp_R^{\rm op}$ we define its $n$-th $\pi$-jet by 
\begin{equation}\label{e.defJn}
J^nX(C)=X(W_n(C))
\end{equation} 
for any $C\in \Nilp_R$. If $X$ is a formal scheme over $R$ the same is $J^n X$ and it holds
\begin{equation}\label{e.adj0}
\Hom_R(\Spf(C),J^nX)=\Hom_R(\Spf(W_n(C)),X)
\end{equation}
\cite{bui95}, \cite{bor11a}, \cite{borsah19}, \cite{bps}.

For a smooth commutative formal group scheme $G$ over $R$ there is a short exact sequence of formal group schemes
\begin{equation}\label{eq:cwse}
0 \map N^nG \map J^nG \map G \map 0,
\end{equation}
which we call the \emph{canonical Witt formal scheme extension} of $G$ because of Theorem~\ref{t.Nn} below. Let $G[p^\nu]$ be the $p^\nu$-torsion 
formal subgroup scheme of $G$ and let $G[p^\infty]$ denote the sheaf on $\Nilp_R^{\rm op}$ such that
$G[p^\infty](C)=\varinjlim_\nu G[p^\nu](C)$, for any $C\in \Nilp_R$.
For each $\nu$ the closed immersions 
$G[p^\nu] \inj G[p^{\nu+1}]$ 
induce closed immersions of $\pi$-jets $J^n(G[p^\nu]) \inj J^n(G[p^{\nu+1}])$ and 
\[ J^n(G[p^\infty])= \varinjlim_\nu J^n(G[p^\nu]) \]
 as sheaves on $\Nilp_R^{\rm op}$; see Lemma \ref{l.JnGlimit}.

Consider the natural projection map $u\colon 
J^n(G[p^\infty]) \map G[p^\infty]$ 
and let $N^n(G[p^\infty])$ denote the kernel of $u$. 
Buium in \cite[Corollary 1.2]{bui13} shows that if $e=1$, $p>2$ and $G$ is the formal multiplicative group scheme over $R$, then the sheaf $N^n(G[p^\infty])$ is representable by a formal $R$-scheme and is isomorphic to $\widehat{\bb{A}}^n$, the $n$-dimensional affine space over $\Spf(R)$.

In this paper, we will enrich Buium's result and extend it to any smooth commutative formal $R$-group scheme $G$ of relative dimension $d\geq 1$.
In fact, in Theorem \ref{t.Nn} we show that if $p\geq e+2$ the kernel $N^n(G[p^\infty])$ is isomorphic to $ (\wh{\WW}_{n-1})^d$ 
where $\wh{\WW}_{n-1}$ is $\widehat{\bb{A}}^n$ endowed with the group structure of Witt vectors of length $n$.
Further, we deduce the following result (see Theorem~\ref{th.WJG}).

\begin{maintheorem*}
 Assume $p\geq e+2$. 
	Given a smooth commutative formal group scheme $G$ of relative dimension $d$ over 
	$R$, for any positive integer $n$ the natural morphism $J^nG\to G$ gives an exact sequence
	$$
	0 \map (\wh{\WW}_{n-1})^d \map J^n G[p^\infty] \map G[p^\infty] \map 0
	$$
	of sheaves on $\Nilp_R^{\rm op}$.
\end{maintheorem*}
We remark that by Lemma \ref{l.JnGlimit} it is $J^n(G[p^\infty])=(J^n G)[p^\infty]$ as sheaves on $\Nilp_R^{\rm op}$; hence there is no possible ambiguity in the above statement.

\subsection{Plan of the paper}
In Section~\ref{sec:pjets facts} we recall the definition and properties of $\pi$-jets in the setting of formal schemes, with particular attention to the adjunction between jet algebras and Witt vectors \eqref{e.adj}.

In Section~\ref{s.jetkernel} we focus on the notion of shifted Witt vectors $W_n^+$, introduced by \cite{borsah19}, and show that $W_n^+$ induces an adjoint functor to $N^n$ (Theorem \ref{t.adj0}). This is an important result, analogous to the adjunction formula that involves $W_n$ and $J^n$ \eqref{e.adj0}. Then we show that given a smooth formal $R$-group scheme $G$ of dimension $d$, we have 
\begin{itemize}[nolistsep]
\item[i)] \emph{ For all $n>0$, $N^nG \simeq J^{n-1}(N^1G)$;} see Theorem~\ref{t.Njet}.
	\item[ii)] \emph{Assume $p\geq e+2$. Then there is a natural isomorphism of formal group schemes }
$N^n G\simeq (\wh{\WW}_{n-1})^d\ $; see Theorem~\ref{t.Nn}. 
\end{itemize} 
The proof of the first fact reduces to a local computation in coordinates which is detailed in the Appendix section. Another important ingredient is the notion of \emph{lateral Frobenius} introduced in \cite{borsah19}. 
Both results i) and ii) are generalized to the case of $m$-shifted Witt vectors in \cite{Sah} by the third author. 

In Section~\ref{sec:tors} we apply the previous results to the study of the sheaves $J^n(G[p^\infty])$ and $N^n(G[p^\infty])$ and deduce a statement similar to Theorem~\ref{t.Nn} where $G$ is replaced by $G[p^\infty]$, see Theorem~\ref{th.WJG}.

In this paper all rings are assumed to be commutative with unit and $\Alg{R}$ denotes the category of $R$-algebras, i.e., of ring homomorphisms $R\to B$.

\section{Arithmetic jets}\label{sec:pjets facts}

\subsection{Conventions} Let $R$ be the base ring fixed in the introduction. Given a formal scheme $X$ and a fixed point $x\colon\Spf(R)\to X$, one can consider the fibre of $x$ under the natural map 
%$u^{(n)}\colon J^nX\to X$,
$J^nX\to X$,
which is the closed formal subscheme $N^nX=(J^nX)_x= J^nX\times_X\Spf(R)$. Note that
if $G$ is a formal group scheme then $J^nG$ is naturally a formal group scheme too and we set $N^nG=(J^nG)_\varepsilon=\ker(J^nG\to G)$ to be the fibre along the unit section $\varepsilon$.

If $X$ is a functor on $\Alg{R}$, we will usually denote by $\widehat{X}$ the restriction of $X$ to $\Nilp_R$. Let $R\langle x_1,...,x_n\rangle$ be the $\pi$-adic completion of the $R$-polynomial algebra in $n$ variables. 

Let $\hG:=\Spf(R\langle x\rangle)$ be the additive formal group scheme over $R$. Note that this formal group scheme should not be confused with the $(x)$-adic formal group $\GG_a^{\rm for}=\Spf(R[[ x]])$, the formal completion of $\GG_a$ along the zero section.

If $\mathcal F\in R[[\bx,\by]]$ is a commutative formal group law of dimension $g$, let ${\mathcal F} \{n\}$ be the formal group law given by $\pi^{-n}\mathcal F(\pi^n \bx,\pi^n \by)$, for any $n\geq 1$. Note that ${\mathcal F} \{n\}$ endows $\Spf(R\langle \bx\rangle)$ with a structure of formal group scheme over $R$.

If $B$ is an $R$-algebra, $\rho=\rho_B\colon R\to B$ always denote the corresponding ring homomorphism. If the context is clear, we will write $r$ in place of $\rho(r)\in B$.

\subsection{Witt vectors over $R$}\label{s.wittvectors}
In the following pages $W_n$ denotes the functor of $\pi$-typical Witt vectors of length $n+1$ on $R$-algebras. Hence, for any $R$-algebra $B$, the ring $W_n(B)$ is always considered with its natural $R$-algebra structure, which depends on $\phi$. We explain this briefly.

As functor on $\Ou$-algebras $W_n$ coincides with the so-called functor of ramified Witt vectors of length $n+1$ (see \cite{haz}, \cite{bc}). Let $w\colon W_n(R) \map \prod_{i=0}^n R$ be the ghost map. Then for any Witt vector $\ba=(a_0,\dots, a_n)$, $w(\ba)=(w_0(\ba), \dots, w_n(\ba))$ where $w_i$ are the ghost polynomials
\begin{equation}\label{e.ghost}
w_i = x_0^{q^i}+ \pi x_1^{q^{i-1}}+ \cdots + \pi^i x_i.
\end{equation}

Since $R$ has a lifting of Frobenius $\phi$, by the universal property of Witt vectors there exists a ring homomorphism of $\Ou$-algebras $\exp_{\delta}$ making the following diagram commute (see \cite[(2.9)]{bps})
\begin{equation}\label{d.rwr}
\xymatrix{
	R\ar[rr]^{\exp_\dd} \ar[drr]_{(\phi^0,\phi,\dots,\phi^n)\ } && W_n(R)\ar[d]^w \\
& & \prod_{i=0}^{n} R 
}
\end{equation}

Let $B$ be an $R$-algebra.
Then $W_n(B)$ is naturally endowed with the $R$-algebra structure 
\[R\stk{\exp_{\dd}}{\longrightarrow} W_n(R)\stk{W_n(\rho_B)}{\longrightarrow} W_n(B).
\]

In \cite[\S 3.2]{borsah19} the authors give an equivalent construction of the functor $W_n$. 
The ghost map $w$ in \eqref{d.rwr} is $\Ou$-linear, but not $R$-linear in general, if the ring $\prod_{i=0}^{n} R$ is endowed with the direct product $R$-module structure. It is then preferable to change the $R$-module structure on the product ring so that $w$ becomes $R$-linear. 
Let ${}^{\phi^n}\!B$ denote the ring $B$ with the $R$-algebra structure induced by $\rho_B\circ \phi^n\colon R\to B$, and let 
\begin{equation}\label{e.prodphi}
\prodphi^n (B):=\prod_{i=0}^n ({}^{\phi^i}\!B)
\end{equation}
be the direct product algebra. Its underlying ring is $\prod_{i} B$ and there is a commutative diagram of $R$-algebras
\begin{equation}\label{d.rwr2}
\xymatrix{
	R\ar[rr]^\varphi \ar[drr]_{({\rm id},\phi,\dots,\phi^n)\ } && W_n(R)\ar[d]^w \ar[rr]^{W_n(\rho_B)}&& W_n(B)\ar[d]^w \\
	& & \prodphi^n(R) \ar[rr]^{\prod_i(\rho_B)}&&\prodphi^n(B)
}
\end{equation}
Then Frobenius and Verschiebung maps can be described in terms of ghost components as in the case of ramified Witt vectors, with caution when considering the $R$-algebra structure. As for example, the Frobenius ring homomorphism $F\colon W_n(B)\to W_{n-1}(B)$ described in terms of ghost components as the left shift is $\phi$-semilinear. We prefer then to write it as the homomorphism of $R$-algebras
\begin{equation}\label{e.F}
F\colon W_n(B)\to W_{n-1}({}^\phi\!B)
 \end{equation}
corresponding to the homomorphism of $R$-algebras 
 \begin{equation}
 \label{e.Fw}
 F_w\colon \prodphi^n\! (B)\to \prodphi^{n{\text{--}}1}\! ({}^\phi\!B), \quad (b_0,\dots, b_n)\mapsto (b_1,\dots,b_n).
 \end{equation}
Similarly, the Verschiebung map $V\colon W_n(B)\to W_{n+1}(B)$ is described on ghost components as the right shift multiplied by $\pi$. Clearly it is $\Ou$-linear but not $R$-linear in general. We prefer then to write it as the homomorphism of $R$-modules 
\begin{equation}\label{e.V}
V\colon W_n({}^\phi\!B)\to W_{n+1}(B)
\end{equation}
corresponding to the homomorphism of $R$-modules
\begin{equation}
\label{e.Vw}
V_w\colon \prodphi^n\! ({}^\phi\!B)\to \prodphi^{n{\text{+}}1}\! (B), \quad (b_0,\dots, b_n)\mapsto (0,\pi b_0,\dots,\pi b_n).
\end{equation}
Then $FV$ is multiplication by $\pi$ on $W_n({}^\phi\!B)$.

Since $\phi$ might not be invertible, one can not write $B$ in place of $ {}^\phi\!B$ in \eqref{e.F} and \eqref{e.V}. 
However, since $\phi$ is the identity on $\Ou$, the $\Ou$-module structure on $B$ and ${}^\phi\!B$ are the same.

\begin{remark}\label{r.piadicW}
If $B$ is a $\pi$-adic $R$-algebra (by this we mean $\pi$-adically complete and separated) then the same is $W_n(B)$ for any $n$. The proof works as in \cite[Proposition 3]{zin}.
\end{remark}

\subsection{Shifted Witt vectors} \label{s.nuclearwitt} 
We recall the construction of $0$-shifted Witt vectors as introduced in 
\cite{borsah19} and \cite{Sah}. Here we simply refer to them as shifted Witt vectors. The general theory of $m$-shifted Witt vectors is developed in \cite{Sah}.

Let $B$ be an $R$-algebra and set-theoretically 
define 
\begin{align}
\label{e.tWn}
\tW_n(B):=R \times_B W_n(B) \simeq R \times W_{n-1}(B).
\end{align} 
Also define the product ring 
	$$\pprodn(B) := R \times \prod_{i=1}^n ({}^{\phi^i}B)= R\times {\prod}_{\phi}^{n-1}({}^{\phi}\!B) ,
	$$
	where $\prodphi^n(B)$ was introduced in \eqref{e.prodphi}. Note that there is an isomorphism of $R$-algebras 
	\begin{equation*} 
	\pprodn B:= R\times {\prod}_{\phi}^{n-1}({}^{\phi}\!B) \simeq 
	R\times_B {\prod}_{\phi}^{n}(B)
	\end{equation*}
	mapping $(r,b_1,\dots,b_n)$ to the element $\left(r,(\rho(r),b_1,\dots,b_n)\right)$.

Define the \emph{ apriori} set-theoretic map $w^+\colon \tW_n(B) \map \pprodn(B)$ given
by $w^+(r,b_1,\dots, b_n) = (w_0,\dots, w_n)$ where 
$$w_i = r^{q^i}+\pi b_1^{q^{i-1}} + \dots + \pi^i b_i,
$$
for all $i=0,\dots, n$. Then note that 
$\tW_n(B)$ naturally is endowed with the Witt ring structure of addition and 
multiplication making $w^+$ a ring homomorphism. Hence we have 
the following commutative diagram
\begin{equation}\label{d.plus}
\xymatrix{
	\tW_n(B)\ar[rr]^{w^+} \ar@{}[drr]|\square\ar[d] &&\pprodn(B)\ar[rr]^{{\rm pr}} \ar@{}[drr]|\square \ar[d]&& R\ar[d]^{\rho} \\
	W_n(B)\ar[rr]^w && \prodphi^n(B)\ar[rr]^{{\rm pr}_0} &&B
}
\end{equation}
where ${\rm pr}_0$ is the projection onto the $0$-th component.
The $R$-algebra $\tW_n(B)$ was denoted by $\tilde W_n(B)$ in \cite[\S 4]{borsah19} and by $W_{0n}(B)$ in \cite{Sah}.

Since the lower horizontal arrows in \eqref{d.plus} are homomorphisms of $R$-algebras, the same are the upper horizontal arrows. Hence the left hand square in \eqref{d.plus} is a diagram of $R$-algebras and, up to the above identifications, it can be illustrated as 
\begin{equation}\label{d.plus2}
\xymatrix{
	R\times W_{n-1}({}^\phi\!B)\ar[r]^{w^+} \ar[d] &R\times \prodphi^{n\text{--}1}({}^\phi\!B) \ar[d] \\
	W_n(B)\ar[r]^w & \prodphi^n(B)}\quad 
\xymatrix@C-10pt{
	\big(r,(b_1,\dots,b_n)\big)\ar[r]\ar[d]& \big(r,(w_1(r,b_.),\dots,w_n(r,b_.))\big) \ar[d] \\
	(r,b_1,\dots,b_n) \ar[r] & (r, w_1(r,b_.),\dots,w_n(r,b_.))} 
\end{equation}
where we have written $r$ in place of $\rho(r)$ in $B$ and 
$w_i$ are the ghost polynomials in \eqref{e.ghost}.

\subsection{Prolongation sequences} For any formal schemes $Y$ and $Z$ over $\Spf(R)$ we say that $(u,\dd)\colon Z \map Y$ is a prolongation if $u\colon Z \map Y$ is a morphism of formal schemes over $\Spf(R)$ and $\dd\colon \Ou_Y \map u_* \Ou_Z$ is a $\pi$-derivation on the sheaves (cf. Appendix \ref{sec:app}).
Then a sequence of formal schemes $T^*=\{T^n\}_{n=0}^\infty$ is a \emph{ prolongation sequence} if for each $n$, $(u_n,\dd_n)\colon T^{n+1} \map T^n$ is a prolongation of formal group schemes over $\Spf(R)$ satisfying $u_{n-1}^* \circ \dd_n = \dd_{n-1} \circ u_n^*$ and making the following diagram commute
$$
\xymatrix{
	R \ar[r] & u_* \Ou_Z \\
	R \ar[u]^\dd \ar[r] & \Ou_Y\ . \ar[u]_\dd
}
$$
A morphism of prolongation sequences $T^*\to P^*$ is a system of morphisms of formal schemes $f_n\colon T^n\to P^n$ that satisfy the expected commutations: $f_n \circ u_{n}= u_{n}\circ f_{n+1}$ and $f_n \circ \dd_{n}= \dd_{n}\circ f_{n+1}$.
	For each $n$, let $S^n = \Spf(R)$. Then the fixed $\pi$-derivation $\dd$ on $R$ makes $S^*$ into a prolongation sequence. Let $\mcal{C}_{S^*}$ denote the category of prolongation sequences defined over $S^*$.

\subsection{Jet spaces}\label{s.jet} 
Given a formal scheme $X$ over $S^0=\Spf(R)$, Buium constructs the \emph{ canonical prolongation sequence} $J^*X= \{J^nX \}_{n=0}^\infty$ where $J^0X =X$ and by \cite[Proposition~1.1]{bui00}, $J^*X$ satisfies the following universal property: for any $T^*$ in $ \mcal{C}_{S^*}$ it is
\begin{align}
 \Hom_{S^0} (T^0, X) = \Hom_{\mcal{C}_{S^*}}(T^*,J^*X)\ .
\end{align}
Moreover, by \cite{bor11b} and \cite{bps} we have the following functorial description: 
\begin{equation*}
J^nX(C) = X(W_n(C))
\end{equation*} 
for any $C\in \Nilp_R$. In particular in the affine case with $X=\Spf(A)$ and $J^nX=\Spf(J_nA)$, we have a natural adjunction
\begin{equation}\label{e.adj}
\Theta\colon \Hom_R(J_nA, C) \stackrel{\sim}{\longrightarrow} \Hom_R(A,W_n(C))
\end{equation} 
such that $w_0\circ \Theta(g)=g\circ \iota$ with $\iota\colon A\to J_nA$ the natural morphism. 

Here we make the above adjunction explicit when 
$X = \Spf(R\langle x\rangle)$ is the formal affine line over $\Spf(R)$.
 Let $A = R\langle x \rangle$. Then $J^nX = \Spf(J_nA)$ and
\begin{equation}\label{e.p}
	J_nA = R\langle x, x', \dots x^{(n)}\rangle = R\langle \sfp_0,\dots \sfp_n \rangle
\end{equation}
where $x,x',\dots , x^{(n)}$ are the Buium-Joyal coordinates and $\sfp_0,\dots \sfp_n$ are the Witt coordinates and they satisfy $\sfp_0 =x$, $\sfp_1 = x'$ while the general relation between the above two coordinate systems can be found in \cite[Proposition~2.10]{bps}. 

If $g \in \Hom_R(J_nA,C)$, then 
$\Theta(g) \in \Hom_R(A, W_n(C))$ is determined by 
by 
\[ \Theta(g)(x) = (g(\sfp_0),\dots , g(\sfp_n))\ . \]

Note that when $G = \hG$ we have the following isomorphism of formal group schemes
\begin{equation}\label{e.jetGa}
J^n \hG=\widehat{\bb W}_n.
\end{equation}
where $\widehat{\bb{W}}_n$ is $\widehat{\bb{A}}^{n+1}$ endowed with the 
additive group structure of Witt vectors of length $n+1$.

\section{The Kernel as a $\pi$-jet space}\label{s.jetkernel}
For any sheaf of groups $G$ on $\Nilp_R^{\rm op}$
one defines 
\[N^nG:=\ker(J^nG\stackrel{u}{\longrightarrow} G)\] 
where $u$ is the natural morphism.
Scope of this section is to take a closer look at the kernel $N^nG$ in the case $G$ is representable by a smooth formal scheme. Since the kernel is the fibre at the unit section, we will first consider more general fibres.
 
 Let $X$ be a smooth formal scheme over $R$ with a marked point $a$ and let $u\colon U\to \AA=\Spf(R\langle\bx\rangle)$ be an \'etale chart around $a$, where $\bx$ denotes here a finite family of indeterminates. Hence $U$ is an open affine formal subscheme of $X$, $u$ is \'etale, $a$ factors through $u$ and $u\circ a$ is the zero section $0$ of the affine space $\AA$. 
 By \cite[Proposition~3.13 \& Corollary~3.16]{bui05} (see also \cite[Proposition 3.12]{bps}), we have $	J^nU \simeq J^n\AA \times_\AA U $ for all $n$ and hence 	
 \begin{equation}\label{e.NXNA}
 N^nX= J^nX\times_{X,a}\Spf(R)=J^nU\times_{U,a}\Spf(R)=J^n\AA\times_{\AA,0}\Spf(R)=N^n\AA\ ;
 \end{equation}
 in particular, $N^nX$ is formal affine and isomorphic to $N^nU$. Up to shrinking $U$, we may assume $U= \Spf(A)$ with $A$ a $\pi$-adically complete separated $R$-algebra. Then the point $ a\colon \Spf(R)\to U$ induces an $R$-algebra morphism $\varepsilon\colon A \map R$. 
 Let $J^nU = \Spf(J_nA)$.
 It is immediate to check that
 \begin{equation}\label{eq:Nn}
 	N^nU = \Spf(J_nA\hat\otimes_{A,\varepsilon} R).
 \end{equation}

\subsection{Adjunction}\label{s.functNn}
Let $\aAlg{R}$ denote the category of augmented (commutative) $R$-algebras. Its objects are commutative $R$-algebras $A$ together with an augmentation $\varepsilon$, i.e. an $R$-algebra morphism $\varepsilon\colon A\to R$; morphisms in $\aAlg{R}$ are morphisms of $R$-algebras $h\colon A_1\to A_2$ respecting augmentations, i.e. $\varepsilon_2\circ h=\varepsilon_1$.
For any $(A,\varepsilon)$ in $\aAlg{R}$ we define the $R$-algebra \begin{equation}\label{e.Nn}
N_nA:= J_nA\otimes_{A,\varepsilon} R.
\end{equation}
Note that shifted Witt vectors yield objects in $\aAlg{R}$. Indeed, let $B$ be an $R$-algebra and let $w_0^+\colon W_n^+(B)\to R$ denote the projection onto the first component, i.e., the composition of the upper horizontal arrows in \eqref{d.plus}. Then $W_n^+(B)$ together with $w_0^+$ is an augmented $R$-algebra. Hence the above construction defines a functor
\[W_n^+\colon \Alg{R}\longrightarrow \aAlg{R}\ , \quad B\mapsto (W_n^+(B),w_0^+) \ ,\]
on the category of $R$-algebras.

We now prove a key result: $N_n$ and $W_n^+$ is a pair of adjoint functors.

\begin{theorem}\label{t.adj}
For any augmented $R$-algebra $(A,\varepsilon)$ and any $R$-algebra $B$ there is a natural bijection
\begin{equation}\label{e.phiplus}
\Theta^+\colon \Hom_R(N_nA,B)
\stackrel{\sim}{\longrightarrow}\tHom(A,\tW_n(B)) \ .
\end{equation}	
\end{theorem}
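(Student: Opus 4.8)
The plan is to exhibit $\Theta^+$ as a composite of three natural bijections, thereby reducing the augmented adjunction to the non-augmented one \eqref{e.adj} together with the fibre-product descriptions of $N_nA$ in \eqref{e.Nn} and of $\tW_n(B)$ in \eqref{e.tWn}. No genuinely new input is needed beyond careful bookkeeping of the base-point and augmentation conditions.

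First I would unwind the left-hand side using the universal property of the tensor product. By \eqref{e.Nn}, $N_nA = J_nA \otimes_{A,\varepsilon} R$ is the pushout of $R \xleftarrow{\varepsilon} A \xrightarrow{\iota} J_nA$ in commutative $R$-algebras, so $\Hom_R(N_nA,B)$ is identified with the set of $R$-algebra homomorphisms $g\colon J_nA \to B$ satisfying $g\circ\iota = \rho_B\circ\varepsilon$. Here one checks that the two $R$-algebra structures on $N_nA$—through $J_nA$ and through the second tensor factor—coincide, because both $\iota$ and $\varepsilon$ are morphisms of $R$-algebras; this is what makes the pushout an object of $\Alg{R}$ and the identification above an identification of $R$-algebra homomorphisms.

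Next I would transport this across $\Theta$. The adjunction \eqref{e.adj} gives a natural bijection between such $g$ and $R$-algebra homomorphisms $f = \Theta(g)\colon A \to W_n(B)$, and the compatibility $w_0\circ\Theta(g) = g\circ\iota$ converts the constraint $g\circ\iota = \rho_B\circ\varepsilon$ into $w_0\circ f = \rho_B\circ\varepsilon$. Hence $\Hom_R(N_nA,B)$ is naturally identified with the $R$-algebra maps $f\colon A \to W_n(B)$ with $w_0\circ f = \rho_B\circ\varepsilon$. Finally I would invoke $\tW_n(B) = R\times_B W_n(B)$, where the fibre product is taken along $\rho_B\colon R\to B$ and $w_0\colon W_n(B)\to B$ and where $w_0^+$ is the projection to $R$ (so that $w_0$ restricted to the fibre product equals $\rho_B\circ w_0^+$). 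By the universal property of this fibre product, giving $f$ with $w_0\circ f = \rho_B\circ\varepsilon$ is the same as giving the $R$-algebra map $h\colon A \to \tW_n(B)$, $a\mapsto(\varepsilon(a),f(a))$, whose first-coordinate condition reads precisely $w_0^+\circ h = \varepsilon$, i.e.\ $h$ respects augmentations and so lies in $\tHom(A,\tW_n(B))$. Composing the three bijections yields $\Theta^+$.

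The individual steps are routine; the part that requires care is the bookkeeping of the base-point conditions—matching $g\circ\iota = \rho_B\circ\varepsilon$ on the jet side with $w_0^+\circ h = \varepsilon$ on the shifted-Witt side through the intermediate constraint $w_0\circ f = \rho_B\circ\varepsilon$—and confirming that each identification is natural in $A\in\aAlg{R}$ (contravariantly) and $B\in\Alg{R}$ (covariantly). Since $\Theta$ is already natural by \eqref{e.adj} and the pushout and fibre-product universal properties are functorial, naturality of the composite $\Theta^+$ follows formally, so the main work is simply verifying that the three constraints translate into one another as claimed.
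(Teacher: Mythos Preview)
Your proposal is correct and follows essentially the same approach as the paper's proof: both unwind $\Hom_R(N_nA,B)$ via the pushout description \eqref{e.Nn}, transport across the adjunction $\Theta$ of \eqref{e.adj} using the compatibility $w_0\circ\Theta(g)=g\circ\iota$, and then repackage the resulting constraint on $f$ via the fibre-product definition \eqref{e.tWn} of $\tW_n(B)$. Your write-up is more expansive about naturality and the bookkeeping of the augmentation conditions, but the argument is the same three-step chain the paper records as a string of equalities.
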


\begin{proof}
Let $\iota\colon A\to J_nA$ denote the natural morphism. Then 
\begin{align*}
\Hom_R(N_nA,B)=& \{g \in\Hom_R(J_nA,B)\ |\ g\circ \iota =\rho_B\circ \varepsilon\}
\\
=&\{f\in\Hom_R(A,W_n(B))\ |\ w_0\circ f=\rho_B\circ \varepsilon\} \\
=&\{f^+ \in\Hom_R(A,\tW_n(B))\ |\ w_0^+\circ f^+ =\varepsilon\}\\
=&\tHom(A,\tW_n(B)) 
,
\end{align*}
where the first equality follows by \eqref{e.Nn}, the second by \eqref{e.adj} taking $f=\Theta(g)$, the third by definition of $W_n^+(B)$ in \eqref{e.tWn}. 
\end{proof}

By Remark \ref{r.piadicW} an analogous adjunction holds when working with the category of augmented formal $R$-algebras $\faAlg{R} $. We make this fact explicit.
\begin{example}\label{r.formal}
Let $A=R\langle x\rangle$ and $\varepsilon(x)=0$. By \eqref{e.p}
	\begin{equation*}\label{e.adj2} N_nA=J_nA\hat\otimes_{A,\varepsilon} R= R\langle x',\dots, x^{(n)}\rangle=R\langle\sfp_1^+,\dots,\sfp_n^+\rangle,
	\end{equation*}
where $\sfp_i^+ \in R[x',\dots, x^{(n)}]$ denotes the polynomial $\sfp_i\in R[x ,x',\dots, x^{(n)}]$ evaluated at $x=0$. Then the formal counterpart of \eqref{e.phiplus}
works as follow: given $g\in \Hom_R(N_nA,B)$, then $\Theta^+(g)$ maps $x$ to $(0,g(\sfp_1^+),\dots ,g(\sfp_n^+))$.

The higher dimensional case is analogous. 
Let $A=R\langle\bx\rangle$ with $\bx $ a collection of $r$ indeterminates
$\{x_{ 1},\dots ,x_{ r}\}$ and let $\varepsilon$ be the zero section. Then $J_nA\simeq R\langle\bp_0,\bp_1,\dots , \bp_n\rangle$ where 
$\bp_i$ denotes a collection of polynomials
$\{\sfp_{i,1},\dots ,\sfp_{i,r}\}$ and $\sfp_{i,j}\in R[x_j,x_j',...,x_j^{(n)}]$ plays the role of $\sfp_i$ in \eqref{e.p}.
Then
\begin{equation}\label{e.Nnp}
N_n A \simeq R\langle\bp_0,\dots \bp_n\rangle
{\hat \otimes}_{R\langle\bx\rangle,\varepsilon} \ R \simeq R\langle\bp_1^+,\dots , \bp_n^+\rangle,
\end{equation}
where $\bp_i^+$ denotes the collection of polynomials
$\{\sfp_{i,1}^+,\dots ,\sfp_{i,r}^+\}$ with $\sfp_{i,j}^+$ obtained by evaluating $\sfp_{i,j}$ at $x_j=0$.
Finally for a homomorphism $g\colon N_nA\to B$, $\Theta^+(g)$ maps $x_i$ to $(0,g(\sfp_{1,i}^+),\dots ,g(\sfp_{n,i}^+))$.
\end{example}

We can now describe the functor $N^n X$ on $R$-algebras as done in \eqref{e.adj0} for $J^nX$. 
\begin{theorem}
	\label{t.adj0}
Let $X$ be a smooth formal scheme over $R$ with a marked point $x$ and let $B$ be in $\Nilp_R$. Then 
\[ N^n X(B)=\Hom_{R\text{\rm -pt}}(\Spf(W_n^+(B)), X),
\]
where on the right we are considering morphisms of $R$-pointed formal schemes.
\end{theorem}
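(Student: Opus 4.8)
The plan is to reduce the statement about the functor $N^n X$ on $\Nilp_R$ to the affine adjunction already established in Theorem~\ref{t.adj}. The key observation is \eqref{e.NXNA}: for a smooth formal scheme $X$ with marked point $x$, passing to an \'etale chart $u\colon U\to\AA$ around $x$ gives $N^n X = N^n U$ with $U=\Spf(A)$ affine and $A$ a $\pi$-adically complete separated $R$-algebra carrying the augmentation $\varepsilon\colon A\to R$ induced by $x$. So it suffices to compute $N^n U(B)$ for $B$ in $\Nilp_R$ and to interpret the result as $R$-pointed morphisms $\Spf(W_n^+(B))\to X$.

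First I would use \eqref{eq:Nn}, which says $N^n U = \Spf(N_n A)$ with $N_n A = J_n A\,\hat\otimes_{A,\varepsilon}R$. Since $N_n A$ is $\pi$-adically complete, for $B$ in $\Nilp_R$ the functor-of-points value is
\[
N^n U(B)=\Hom_R(\Spf(B),\Spf(N_n A))=\Hom_R(N_n A,B),
\]
the last equality because $\pi$ is nilpotent on $B$ so the continuous and ordinary hom-sets agree. Now I would invoke the formal counterpart of Theorem~\ref{t.adj}, valid by Remark~\ref{r.piadicW} on the category $\faAlg{R}$ of augmented formal $R$-algebras, to obtain a natural bijection
\[
\Theta^+\colon \Hom_R(N_n A,B)\xrightarrow{\ \sim\ }\ftHom(A,\tW_n(B)),
\]
where the right-hand side is the set of augmentation-preserving $R$-algebra homomorphisms $f^+$ with $w_0^+\circ f^+=\varepsilon$.

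The final step is to translate $\ftHom(A,\tW_n(B))$ into $R$-pointed morphisms of formal schemes. Since $\tW_n(B)=W_n^+(B)$ is $\pi$-adically complete (again by Remark~\ref{r.piadicW}), we have $\Hom_R(A,W_n^+(B))=\Hom_R(\Spf(W_n^+(B)),U)$, and the condition $w_0^+\circ f^+=\varepsilon$ is precisely the compatibility with the marked point: the composite $\Spf(W_n^+(B))\to U\to X$ must agree, via the augmentation $w_0^+\colon W_n^+(B)\to R$, with the point $x$. This is exactly the defining condition for a morphism of $R$-pointed formal schemes, so $\ftHom(A,\tW_n(B))=\Hom_{R\text{\rm -pt}}(\Spf(W_n^+(B)),U)$. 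Finally I would check that replacing $U$ by $X$ on the right is harmless: because $x$ factors through the open $U$ and $w_0^+$ is a marked point, any such $R$-pointed morphism lands in $U$, giving $\Hom_{R\text{\rm -pt}}(\Spf(W_n^+(B)),U)=\Hom_{R\text{\rm -pt}}(\Spf(W_n^+(B)),X)$.

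The main obstacle I anticipate is not any single hard computation but rather the careful bookkeeping of completeness and pointedness at each identification. One must verify that $W_n^+(B)$ is genuinely $\pi$-adic so that $\Spf$ behaves well and the formal version of the adjunction applies, and one must confirm that the locality argument in \eqref{e.NXNA}—already guaranteeing $N^n X=N^n U$—is compatible with the point-preserving hom-sets, i.e.\ that the augmentation $w_0^+$ of $W_n^+(B)$ correctly encodes the marked point $x$ of $X$ and forces the morphism through the chosen chart $U$. Everything else is a direct unwinding of Theorem~\ref{t.adj} together with Remark~\ref{r.piadicW}.
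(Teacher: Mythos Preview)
Your route differs from the paper's. Instead of reducing to an affine chart via \eqref{e.NXNA}, the paper argues directly for arbitrary $X$: it observes that $\Spec(W_n^+(B))$ is the push-out of $\Spec(B)\to\Spec(R)$ and $\Spec(B)\to\Spec(W_n(B))$ in the category of all schemes \cite[07RS]{st}, so that an $R$-pointed morphism $\Spec(W_n^+(B))\to X$ is precisely a morphism $f\colon\Spec(W_n(B))\to X$ whose restriction along $w_0$ equals $x\circ\rho$, i.e.\ a $B$-point of $N^nX$. The formal case then follows by reduction modulo $\pi^m$.

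Your chart-based argument has a gap at the last step. After establishing $N^nX(B)=\Hom_{R\text{\rm -pt}}(\Spf(W_n^+(B)),U)$, you assert that every $R$-pointed morphism $\Spf(W_n^+(B))\to X$ factors through $U$ ``because $x$ factors through the open $U$ and $w_0^+$ is a marked point''. As written this is not a proof: knowing where the marked point goes does not in general force a morphism into an open neighbourhood of that point. The missing ingredient is that $\ker(w_0^+)$ is a \emph{nilpotent} ideal of $W_n^+(B)$ whenever $B\in\Nilp_R$: under $W_n^+(B)\to W_n(B)$ it identifies with the image of $V$, and the identity $V(a)V(b)=\pi V(ab)$ together with the nilpotence of $\pi$ in $W_n(B)$ kills high powers. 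Granting this, $\Spf(R)\hookrightarrow\Spf(W_n^+(B))$ is a homeomorphism on underlying spaces, so factoring through the open $U$ is a purely topological condition decided by the marked point, and your argument then goes through. The paper's push-out proof sidesteps this issue entirely and, incidentally, does not invoke smoothness in the non-affine step.
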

\begin{proof}
The result is clearly true if $X$ is affine by Theorem \ref{t.adj}. For the general case, assume first that $X$ is an $R$-scheme and consider the following diagram
	\[\xymatrix{ && \Spec(B)\ar[ddll]_g \ar[d]^\rho\ar[r]^(0.4){w_0}& \Spec(W_n(B))\ar@{.>}[d]\ar[ddrr]^f &&\\
			&&\Spec(R) \ar[drrr]_x\ar@{.>}[r]^(0.4){w^+_0}& \Spec(W_n^+(B)) \ar[drr]^(0.44){f^+}&&\\
	J^nX \ar[rrrrr]^u	&&&&&X
	}\]
where $ w_0$ is induced by the projection on the first component on algebras, $\rho$ is the structure morphism and $u$ is the natural map. Note that $\Spec(W_n^+(B))$ is the push-out of $w,\rho$ in the category of \emph{ all} schemes \cite[07RS]{st}. Then $N^nX(B)=X(W_n^+(B))$. Indeed,
 \begin{align*}
N^nX(B)=& \{g \in\Hom_R (\Spec(B),J^nX )\ |\ u\circ g =x\circ \rho \}
\\
=&\{f\in\Hom_R(\Spec (W_n(B)), X)\ |\ f\circ w_0=x\circ \rho\} \\
=&\{f^+ \in\Hom_R(\Spec (W_n^+(B)),X)\ |\  f^+\circ w^+_0 =x\}\\
=&\Hom_{R\text{\rm -pt}}(\Spec(W_n^+(B)), X).
\end{align*} 	
If $X$ is a formal scheme, then the above holds for all schemes $X \times \Spec R/(\pi^m)$ and thus one concludes. 
\end{proof}

\subsection{A special case}
Let $G$ be a formal group scheme over $R$ and denote by $G^\form$ the formal completion of $G$ along the unit section. 
Let $\mathcal F\in R[\![x_1,\dots, x_r,y_1,\dots, y_r]\!]$ be the formal group law on $ G^\form$, ${\mathcal F}^\phi$ the one obtained by acting on the coefficients of $\mathcal F$ by $\phi$ and ${\mathcal F}^\phi\{1\} :=\pi^{-1}\mathcal F^\phi(\pi \bx,\pi \by)$, where $\pi\bx:=(\pi x_1,\dots, \pi x_r)$.
By \cite[Lemma 2.2]{bui95} it is $N^1G\simeq {\mathcal F}^\phi\{1\}$ as formal $R$-schemes. 
We give below a direct computation of this fact.

\begin{lemma}\label{l.N1Gfgrouplaw}
	Let the notation be as above. Then the formal group law on the formal completion of $N^1G$ at the origin is isomorphic to ${\mathcal F}^\phi\{1\}$.
\end{lemma}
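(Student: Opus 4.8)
The plan is to compute the formal group law of $N^1G$ directly from the description of $N^1G$ as a fibre of the first jet space, using the explicit $n=1$ case of the jet algebra coordinates recalled in \eqref{e.p} and Example~\ref{r.formal}. First I would work locally: choosing coordinates $\bx=(x_1,\dots,x_r)$ for $G^\form$ so that the unit section is the origin, the formal group law $\mathcal F$ encodes addition. By Example~\ref{r.formal} (higher dimensional case), the coordinates on $N^1 A$ are the evaluations $\sfp_{1,j}^+ = \sfp_{1,j}|_{\bx=0}$, and since $\sfp_1 = x'$ is the Buium--Joyal derivative coordinate, the coordinate ring of the formal completion of $N^1G$ at the origin is $R[\![x_1',\dots, x_r']\!]$, i.e.\ the $x'$-variables alone. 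The group law on these coordinates is inherited from the group law on $J^1G$, which in turn is determined by applying the $\pi$-derivation $\dd$ to the group law $\mathcal F$.

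The key computation is then to see how $\dd$ acts on $\mathcal F(\bx,\by)$. Recall that for a lift of Frobenius $\phi$ one writes $\phi(r)=r^q+\pi\dd(r)$, and the jet coordinate $x'$ arises precisely as $\dd x$. Applying $\dd$ to the identity that expresses the group operation and then specializing $\bx=\by=0$ (so that the non-derivative coordinates vanish), the mixed terms drop out and one is left with the linear-in-$\pi$ part controlled by $\phi$ acting on the coefficients of $\mathcal F$. Concretely, the induced addition law on the $x'$-coordinates is $\pi^{-1}\mathcal F^\phi(\pi\bx', \pi\by')$, which is exactly ${\mathcal F}^\phi\{1\}$ by the definition recalled just before the statement. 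I would carry this out by writing $\mathcal F(\bx,\by) = \bx + \by + (\text{higher order})$, tracking that $\dd$ of a sum of monomials produces a $\phi$-twist of the coefficients together with a factor governed by the ghost/Witt relation $\sfp_0=x$, $\sfp_1=x'$, and then verifying that after setting the zeroth coordinates to zero the surviving expression matches $\pi^{-1}\mathcal F^\phi(\pi\bx',\pi\by')$.

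The main obstacle I expect is bookkeeping the interaction between the $\pi$-derivation $\dd$ and the nonlinear terms of $\mathcal F$: the Leibniz-type rule for $\dd$ on products is not the ordinary derivation rule but the twisted one coming from $\phi(ab)=\phi(a)\phi(b)$, so $\dd(ab) = a^q\dd(b) + \dd(a)b^q + \pi\dd(a)\dd(b)$. One must check carefully that all the cross terms involving the non-derivative coordinates vanish upon restriction to the fibre over the origin, leaving only the clean $\phi$-rescaled law. Once that vanishing is confirmed, the identification with ${\mathcal F}^\phi\{1\}$ is forced by matching the rescaling $\bx\mapsto \pi\bx$ with the $\pi^i$-weights appearing in the ghost polynomials \eqref{e.ghost} at level $n=1$. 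A secondary point to handle is the isomorphism (rather than equality) in the statement: since one only needs the group law up to isomorphism, any discrepancy arising from the choice of chart or from linear change of the $x'$-coordinates can be absorbed into an isomorphism of formal group laws, so I would phrase the final comparison as an explicit isomorphism intertwining the two laws.
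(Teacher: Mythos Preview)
Your proposal is correct and follows essentially the same approach as the paper's proof: choose local coordinates so that $N^1G$ has coordinate ring generated by the $\bx'$-variables, observe that the induced group law is $\delta(\mathcal F(\bx,\by))$ evaluated at $\bx=\by=\bm 0$, and then verify termwise (using the twisted Leibniz rule and the fact that the non-additive correction $C_\pi$ vanishes on the fibre) that this equals $\pi^{-1}\mathcal F^\phi(\pi\bx',\pi\by')=\mathcal F^\phi\{1\}$. The paper carries this out slightly more tersely by writing $\mathcal F=\sum a_{\alpha,\beta}\bx^\alpha\by^\beta$ and noting that $\delta(a_{\alpha,\beta}\bx^\alpha\by^\beta)_{|\bx=\by=\bm 0}=\pi^{-1}\phi(a_{\alpha,\beta})(\pi\bx')^\alpha(\pi\by')^\beta$, which yields equality (not merely isomorphism) of formal group laws, so your closing remark about absorbing discrepancies into an isomorphism is unnecessary.
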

\begin{proof}
As seen in Remark \ref{r.formal}, we may write
$G^\form= \Spf(R[\![\bx]\!])$ and $N^1G=\Spf(R\langle \bp_1^+\rangle)=\Spf(R\langle \bx'\rangle)$. Let $\delta\colon R[\![\bx,\by]\!]\to R[\![\bx,\by,\bx',\by']\!]$ be the $\pi$-derivation compatible with that of $R$ and such that $\delta(\bx)=\bx'$, $\delta(\by)=\by'$.
If $\mathcal F(\bx,\by)$ is the formal group law of $G$, the formal group law of $N^1G$ is $\delta(\mathcal F(\bx,\by))$ evaluated at $\bx=\bm 0,\by=\bm 0$.
Write $\mathcal F(\bx,\by)=\sum_{\alpha,\beta} a_{\alpha,\beta} \bx^\alpha \by^\beta$ with $\alpha,\beta$ varying in $\NN^r \smallsetminus \{\bm 0\}$, $\bx^\alpha:=x_1^{\alpha_1}x_1^{\alpha_2}\dots x_r^{\alpha_r}$, and the coefficients of the monomials of degree $1$ equal to $1$.
By induction, applying the usual rules of $\pi$-derivations, one checks that 
\begin{multline*}\delta(\mathcal F(\bx,\by))_{|\bx=\bm 0, \by=\bm 0}=\sum_{\alpha,\beta} \delta(a_{\alpha,\beta} \bx^\alpha\by^\beta )_{|\bx=\bm 0, \by=\bm 0}=\\ \sum_{\alpha,\beta} \pi^{-1}\phi(a_{\alpha,\beta}) (\pi\bx')^\alpha(\pi\by')^\beta=\pi^{-1}\mathcal F^\phi(\pi\bx,\pi\by)=\mathcal F^\phi\{1\}.
\end{multline*}
\end{proof}

\subsection{Lateral Frobenius}\label{s.latfrob} 
Let $X$ be a formal $R$-scheme. As $n$ varies, the $\pi$-jet spaces $J^nX$ form an inverse system of formal schemes and, more precisely, a prolongation sequence, whence a lifting of Frobenius $\phi_J$ exists on the limit. Clearly, the transition maps $u=u^{n+1}_n\colon J^{n+1} X\to J^nX$ induce homomorphisms $N^{n+1} X\to N^nX$, but the image of $\phi$ restricted to 
$N^{n+1}X$ is not necessarily contained in $N^nX$ and hence $\phi$
 does not induce a lifting of Frobenius on the sequence of the kernels. For this reason, the notion of \emph{lateral Frobenius} was introduced and studied in \cite{borsah19,Sah}.

On shifted Witt vectors, the lateral Frobenius
$F^+$ is defined as the homomorphism of $R$-algebras making the following diagram
\begin{equation}\label{e.Fplus}
\xymatrix{
\tW_n(B)\ar@{=}[d]\ar[r]^{F^+}& \tW_{n-1}({}^\phi\!B)\ar@{=}[d] \\
R\times W_{n-1}({}^\phi\!B)\ar[r]^{{\rm id}\times F}& R\times W_{n-2}({}^{\phi^2}\!B), 
} 
\end{equation}
commute, where vertical identifications are meant as sets, and $F$ is the Frobenius on Witt vectors recalled in \eqref{e.F}. The homomorphism $F^+$ then corresponds to the homomorphism of $R$-algebras 
\begin{equation}
\label{e.Fwplus}
F_w^+\colon R\times \prodphi^{n{\text{--}}1}\! ({}^{\phi}\!B)\to R\times \prodphi^{n{\text{--}}2}\! ({}^{\phi^2}\!B), \quad (r,b_1,\dots, b_n)\mapsto (r,b_2,\dots,b_n)
\end{equation}
via ghost map, i.e., it makes the following diagram 
\begin{equation}\label{d.plus3}
\xymatrix{
	\tW_n(B)\ar[rr]^-{w^+} \ar[d]_{F^+} && \ar[d]_{F^+_w} \pprodn(B)= R\times \prodphi^{n{\text{--}}1}\! ({}^\phi\!B) \\
	\tW_{n-1}({}^\phi\!B)\ar[rr]^-{w^+} && \pprod_{n{\text{--}}1}\! ({}^\phi\!B)=R\times \prodphi^{n{\text{--}}2}\! ({}^{\phi^2}\!B)
}
\end{equation}
commute.
Then, the homomorphism $F^+$ induces via \eqref{e.adj} a natural morphism
 \[\frakf\colon N^nX\to N^{n-1}X \]
 called again \emph{lateral Frobenius}. It is showed in 
\cite[Theorem 4.3]{borsah19} that $\frakf$ is a lift of Frobenius 
and satisfies 
\[\phi\circ \phi\circ u=\phi\circ u\circ \frakf,\]
where $u$ denotes the immersion $N^mX\to J^mX$ and $\phi$ denotes the Frobenius morphism $J^mX\to J^{m-1}X$ for any $m$. 
\begin{remark}\label{r.wFplus}
For later use, note that the element $(0, b_.)=(0,b_1,\dots,b_n)\in 	\tW_n(B)$ traces in \eqref{d.plus3} the following images
\[
\xymatrix{
	(0,b_{1},\dots ,b_{n}) \ar@{|-{>}}[d] 
	\ar@{|-{>}}[r] &
	(0,\pi b_{1},\pi b_{1}^{q}+ \pi^{2}b_2,\dots,\pi w_{n-1}(b_.) ) \ar@{|-{>}}[d] \\
	(0, c_1,\dots, c_{n-1} )\ar@{|-{>}}[r] & (0,
	\pi b_{1}^{q}+ \pi^{2}b_2,\dots, \pi w_{n-1}(b_.) )\ .
}
\]	
Hence $\pi c_1=\pi b_{1}^{q}+ \pi^{2}b_2$ implies $c_1=b_{1}^{q}+ \pi b_2$ (for $B$ without $\pi$-torsion and hence for any $R$-algebra $B$ by standard arguments). By recursion one sees 
\begin{equation*}
(F^+)^i(0,b_{1},\dots ,b_{n})=(0,b_1^{q^i}+\pi b_2^{q^{i-1}}+\dots+\pi^i b_i, \dots )\in \tW_{n-i}(B)
\end{equation*}
for any $i<n$.
\end{remark}

If $G$ is smooth over $R$, the same are $J^nG$ and $N^nG$ for all $n$. As seen in the previous section, $N^nG=\Spf(N_n A)$ is an affine space over $R$.	In particular the $R$-algebras $N_nA$ are $\pi$-torsion free and therefore the lateral Frobenius homomorphisms $\mfrak{f}^* \colon N_nA \map N_{n+1}A$ induce a unique $\pi$-derivation $\Delta$ on the prolongation sequence $N_*A:=\{N_nA\}_{n=1}^\infty$.

In order to describe $N^n$ as a jet functor we need a preparation lemma.

\begin{lemma}	\label{l.fi}
Let $\AA= \Spf(R\langle\bx \rangle)$ with $\bx $ a collection of $r$ indeterminates, and choose the origin as marked point. Let $\frakf^i\colon N^n\AA\to N^{n-i}\AA$ denote the $i$-th fold composition of lateral Frobenius for any $i\leq n$. 
Then $\frakf^i$ induces an homomorphism of $\pi$-adic $R$-algebras
	\[ (\frakf^i)^*\colon R\langle \bp^+_1,\dots , \bp^+_{n-i}\rangle \longrightarrow R\langle \bp^+_1,\dots , \bp^+_n\rangle \]
such that 
\[	(\mfrak{f}^i)^*(\bp^+_1) = (\bp^+_1)^{q^{i-1}}+ \pi (\bp^+_2)^{q^{i-2}} + \cdots + \pi^{i-1}\bp^+_i.
\] 
\end{lemma}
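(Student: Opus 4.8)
The statement is local and the computation for each of the $r$ indeterminates $x_\ell$ is carried out independently, since the Witt coordinate $\sfp^+_{j,\ell}$ depends only on $x_\ell$ and its Buium--Joyal derivatives. So I would first reduce to the one--variable case $\AA=\Spf(R\langle x\rangle)$, where $N_nA=R\langle \sfp^+_1,\dots,\sfp^+_n\rangle$ as in Example~\ref{r.formal}, and treat the general collection $\bp^+_j$ at the end by the obvious product decomposition. The plan is then to transport the geometric lateral Frobenius $\frakf^i$ to the explicit lateral Frobenius $F^+$ on shifted Witt vectors through the adjunction $\Theta^+$ of Theorem~\ref{t.adj}, and to extract the required image from the ghost description of $F^+$ given in \eqref{e.Fwplus} and Remark~\ref{r.wFplus}.

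Concretely, I would take the universal element $\eta=\Theta^+(\mathrm{id}_{N_nA})\in\tHom(A,\tW_n(N_nA))$, which by Example~\ref{r.formal} is determined by $\eta(x)=(0,\sfp^+_1,\dots,\sfp^+_n)$. Since $\frakf$ is by construction the morphism induced by $F^+$ through the adjunction, the pullback $(\frakf^i)^*$ is identified under $\Theta^+$ with the iterate of $F^+$ applied to $\eta$, so that evaluating at $x$ and comparing the component in position $1$ computes $(\frakf^i)^*(\sfp^+_1)$ as the corresponding Witt component of an iterate of $F^+$ on $(0,\sfp^+_1,\dots,\sfp^+_n)$. To evaluate that component I would pass through the ghost map $w^+$: by \eqref{e.Fwplus} the homomorphism $F^+_w$ merely drops a ghost coordinate, so on ghost components iterating $F^+$ is simply an iterated shift, and the computation becomes transparent. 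Inverting the ghost map, which is legitimate because $N_nA$ is $\pi$--torsion free, then recovers the displayed polynomial. This is most naturally organized as an induction on $i$: the base step is the single--step identity $c_1=b_1^{q}+\pi b_2$ recorded in Remark~\ref{r.wFplus}, and the inductive step uses $(\frakf^i)^*=(\frakf^{i-1})^*\circ\frakf^*$ together with the same ghost--shift bookkeeping.

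The main obstacle is not the ghost algebra but pinning down the correspondence $(\frakf^i)^*\leftrightarrow F^+$ with the correct index and the correct semilinear twists. Indeed $\frakf$ is only a \emph{lift of Frobenius} on the prolongation sequence $\{N^n\AA\}$, hence a morphism over $\phi$ rather than over $\mathrm{id}_R$, and the targets ${}^{\phi}\!B,{}^{\phi^2}\!B,\dots$ appearing in \eqref{d.plus3} must be tracked carefully to be sure they do not disturb the reading--off of Witt components through $\Theta^+$. Once this identification is made precise, the remaining verification is exactly the recursive ghost computation of Remark~\ref{r.wFplus}, and I would close by observing that the general $r$--variable case follows verbatim, since $\frakf^i$ respects the decomposition of $N_nA$ into the blocks indexed by $x_1,\dots,x_r$ and acts on each block by the one--variable formula just established.
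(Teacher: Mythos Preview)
Your proposal is correct and follows essentially the same approach as the paper: both transport the problem through the adjunction $\Theta^+$ of Theorem~\ref{t.adj}, apply it to the identity of $N_nA$ (your universal element $\eta$), and then read off $(\frakf^i)^*(\bp^+_1)$ from the first nonzero Witt component of $(F^+)^i(0,\bp^+_1,\dots,\bp^+_n)$ using Remark~\ref{r.wFplus}. The only cosmetic differences are that the paper works directly in the $r$-variable setting rather than reducing to one variable, and that it appeals to the already--iterated formula at the end of Remark~\ref{r.wFplus} rather than organizing an induction on $i$; your worry about the $\phi$-semilinear twists is not addressed separately in the paper either, it is absorbed into the bare assertion that the adjunction square with vertical maps $-\circ(\frakf^*)^i$ and $(F^+)^i\circ -$ commutes.
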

\begin{proof}
Recall from Example~\ref{r.formal} that $N^n\AA=\Spf(N_nR\langle\bx \rangle) \simeq \Spf(R\langle \bp^+_1,\dots , \bp^+_n\rangle)$. 
Then by definition of $\mfrak{f}$ and \eqref{e.phiplus} with $B= N_n R\langle\bx \rangle$ we have a commutative diagram of rings
\begin{equation*} 
\xymatrix{
\Hom_R(R\langle \bp^+_1,\dots , \bp^+_n\rangle,R\langle \bp^+_1,\dots , \bp^+_n\rangle) \ar[d]^{-\circ (\mfrak{f}^*)^i} \ar[r]^-{\Theta^+}_\sim&	\ftHom(R\langle\bx \rangle ,\tW_n(R\langle \bp^+_1,\dots , \bp^+_n\rangle))
\ar[d]^{(F^+)^i\circ -}\\
\Hom_R(R\langle \bp^+_1,\dots , \bp^+_{n-i}\rangle,R\langle \bp^+_1,\dots , \bp^+_n\rangle) \ar[r]^-{\Theta^+}_\sim &\ftHom(R\langle\bx \rangle ,\tW_{n-i}(R\langle \bp^+_1,\dots , \bp^+_{n}\rangle))
}
\end{equation*}
By Remark \ref{r.wFplus} the identity map on $R\langle \bp^+_1,\dots , \bp^+_{n}\rangle$ traces the following images
\[
\xymatrix{
{\rm id}
\ar@{|-{>}}[d]	\ar@{|-{>}}[r] & 	\bx\mapsto (0,\bp^+_1,\dots,\bp^+_n) \ar@{|-{>}}[d] 
 \\
 (\mfrak{f}^*)^i \ar@{|-{>}}[r] & \bx \mapsto (0, (\bp^+_1)^{q^{i-1}} + \pi(\bp^+_2)^{q^{i-2}}+ \cdots + 
 \pi^{i-1} \bp^+_i, \dots).
}
\]	
The conclusion follows by the explicit description of the map $\Theta^+$ as in the lines below \eqref{e.Nnp}
\end{proof}

We can now prove the main result of this section. This is a particular case of \cite[Theorem~1.3]{Sah} for which we give a shorter proof.

\begin{theorem}
	\label{t.Njet}
	Let $G$ be a smooth formal group scheme over $R$. Then for all $n$ we 
	have
	$$
	N^nG \simeq J^{n-1}(N^1G).
	$$
\end{theorem}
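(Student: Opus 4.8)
The plan is to construct a \emph{canonical} morphism $N^nG\to J^{n-1}(N^1G)$ out of the universal property of jet spaces, using the lateral Frobenius to organize the kernels into a prolongation sequence, and then to verify that this morphism is an isomorphism by an explicit computation in Witt coordinates, for which Lemma~\ref{l.fi} supplies the key formula.

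First I would realize the left-hand terms as a prolongation sequence. For smooth $G$ each $N^nG=\Spf(N_nA)$ is formal affine and $\pi$-torsion free, and, as recalled just before Lemma~\ref{l.fi}, the lateral Frobenius maps $\mathfrak{f}^*\colon N_nA\to N_{n+1}A$ endow $N_*A=\{N_nA\}_{n\ge 1}$ with a compatible $\pi$-derivation $\Delta$. Setting $T^n:=N^{n+1}G$ (so $T^0=N^1G$) and using that $\mathfrak{f}$ is a lift of Frobenius \cite[Theorem~4.3]{borsah19}, the system $T^*=\{N^{n+1}G\}_{n\ge 0}$ becomes an object of $\mathcal{C}_{S^*}$ in formal group schemes. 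Applying the universal property of the canonical prolongation sequence to $\mathrm{id}\colon T^0=N^1G\to N^1G$ then yields a morphism of prolongation sequences $T^*\to J^*(N^1G)$, i.e. a compatible family of homomorphisms $\psi_n\colon N^nG\to J^{n-1}(N^1G)$; these respect the group structures because both the jet functor and $\mathfrak{f}$ are morphisms of formal group schemes.

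It remains to show that each $\psi_n$ is an isomorphism, which I would verify in coordinates. Via an \'etale chart around the unit as in \eqref{e.NXNA} we have $N^nG\simeq N^n\AA$ with $\AA=\Spf(R\langle\bx\rangle)$, so $N^n\AA=\Spf(R\langle\bp^+_1,\dots,\bp^+_n\rangle)$ by \eqref{e.Nnp} while $N^1\AA=\Spf(R\langle\bp^+_1\rangle)$; thus $N^n\AA$ and $J^{n-1}(N^1\AA)$ are formal affine spaces of the same relative dimension $nr$, and $\psi_n$ is compatible with these identifications. On $B$-points, using \eqref{e.defJn}, the target is $J^{n-1}(N^1\AA)(B)=N^1\AA(W_{n-1}(B))=W_{n-1}(B)^r$ while $N^n\AA(B)=B^{nr}$, so both have underlying set $B^{nr}$. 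One expects $\psi_n$ to send a point $(\mathbf{b}_1,\dots,\mathbf{b}_n)\in N^n\AA(B)$ to the point $\bp^+_1\mapsto(\mathbf{b}_1,\dots,\mathbf{b}_n)$ of $N^1\AA(W_{n-1}(B))$, reading the tuple as the Witt vector with these components. This assignment is the identity relabeling on $B^{nr}$, hence bijective and functorial, so that $\psi_n$ would be an isomorphism.

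What must still be checked is that the morphism $\psi_n$ produced abstractly really is this Witt-component relabeling, and this is where Lemma~\ref{l.fi} enters. Since the coordinate rings are $\pi$-torsion free it suffices to compare ghost components; Lemma~\ref{l.fi} shows that the $i$-fold lateral Frobenius carries $\bp^+_1$ to $(\bp^+_1)^{q^{i-1}}+\pi(\bp^+_2)^{q^{i-2}}+\cdots+\pi^{i-1}\bp^+_i$, i.e. to the ghost polynomial $w_{i-1}(\bp^+_1,\dots,\bp^+_i)$, which is exactly the ghost expression that the jet Frobenius on the target attaches to the Witt vector $(\bp^+_1,\dots,\bp^+_n)$; thus $\psi_n$ intertwines the lateral Frobenius with the jet Frobenius, pinning it down as the desired relabeling. \textbf{The main obstacle} is precisely this last identification: one must trace the two adjunctions \eqref{e.adj} and \eqref{e.phiplus} together with the $\phi$-twisted $R$-algebra structures that force the use of the lateral rather than the ordinary Frobenius, and confirm that the abstract $\psi_n$ agrees with the relabeling in coordinates. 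This bookkeeping is the content deferred to the Appendix. (A more formal alternative would be to exhibit a functorial isomorphism of augmented $R$-algebras $W^+_n(B)\cong W^+_1(W_{n-1}(B))$ and conclude by Theorem~\ref{t.adj0}, but establishing that identity appears to carry essentially the same difficulty.)
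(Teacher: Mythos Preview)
Your proposal is correct and follows essentially the same route as the paper: reduce to $N^n\AA$ via an \'etale chart, use the lateral Frobenius to make $\{N^nG\}_{n\ge 1}$ a prolongation sequence with $T^0=N^1G$, invoke Lemma~\ref{l.fi} to identify $(\mathfrak{f}^*)^i(\bp^+_1)$ with the ghost polynomial $w_{i-1}(\bp^+_1,\dots,\bp^+_i)$, and then defer the verification that the resulting comparison map is an isomorphism to the Appendix. The only cosmetic difference is that the paper packages both the construction of the comparison morphism and its bijectivity into the single Appendix result Theorem~\ref{t.coord} (whose hypothesis \eqref{e.condition} is exactly what Lemma~\ref{l.fi} verifies), whereas you spell out the universal-property construction and the coordinate check separately before pointing to the Appendix.
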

\begin{proof}
	Let $\AA=\Spf(R\langle\bx\rangle)$ be an \'etale coordinate system around the identity section of $G$. We have seen in \eqref{e.NXNA} and Example \ref{ex.NnG} that 
	$N^{n}G = N^{n}\AA =\Spec N_{n}A$ with $N_{n}A \simeq R\langle\bp^+_1,\dots , \bp^+_n\rangle$. Now by Lemma \ref{l.fi} the lateral Frobenius satisfies
\[(\mfrak{f}^*)^i(\bp^+_{1}) = (\bp^+_{1})^{q^{i-1}} + \pi (\bp^+_{2})^{q^{i-2}} + 
\cdots + \pi^{i-1} \bp^+_{i} 
\]
	for all $i=0,\dots , n$. Hence by Theorem \ref{t.coord} 	we have $N_{n}A \simeq J_{n-1}(N_{1}A)$ and we are done.
\end{proof}

Here we discuss examples for $G= \hG$ and $\hGm$ in the context of 
Theorem \ref{t.Njet} 

\begin{examples}\label{ex.NnG}
\begin{enumerate}
	\item \label{exNnG1}
		Assume $G=\hG=\Spf(R\langle x\rangle)$ with the comultiplication mapping $x$ to $x\otimes 1 +1\otimes x $. Then $ J^1\hG=\Spf(R\langle x,x'\rangle)$ where the group law is described by 	
		\begin{align*}
		x&\mapsto x\otimes 1 +1\otimes x\ , \\ x'&\mapsto x'\otimes 1+1\otimes x' +C_\pi(x\otimes 1,1\otimes x)
		\end{align*} with $C_\pi(X,Y) = \frac{X^q + Y^q -(X+Y)^q}{\pi}\in \Ou[X,Y]$. Hence $N^1\hG=\Spf(\langle x'\rangle)= \hG$. By Theorem \ref{t.Njet} and equation \eqref{e.jetGa} one concludes $N^n\hG=J^{n-1}(\hG)= \wh{\WW}_{n-1}$. \label{un1} 
		\item \label{un2} Assume $G=\hGm= \Spf(R\langle x,y \rangle/(xy-1 )$ with the comultiplication mapping $x$ to $x\otimes x$. Then $J^1\hGm=\Spf(R\langle x,y,x',y'\rangle/(xy-1,\delta(xy) ) = \Spf(R\langle x,x^{-1},x'\rangle)$ with the group law described by
		\begin{align*}
		x&\mapsto x\otimes x\ , \\ x'&\mapsto x'\otimes x^q+x^q\otimes x'+\pi x'\otimes x' \ .
		\end{align*} 
		Hence	$N^1\hGm=\Spf\left(R\langle x',y'\rangle / (x'+y'+\pi x'y')\right) = \Spf(R\langle x'\rangle) $
		and the group law on the latter maps $x'$ to $x'\otimes 1+1\otimes x'+\pi x'\otimes x'$, i.e., $N^1\hGm=\GG^\form_m\{1\} $ as formal group schemes. Now, $\GG^\form_m\{1\} $, as formal group law, has invariant differential $(1+\pi T)^{-1}dT$ and the corresponding logarithm is 
	\[\pi^{-1}\log(1+\pi T)=\sum_{j\geq 1} \frac{(-\pi)^{j-1}}{j}T^j=T+\sum_{j\geq 2} a_jT^j\in K[\![T]\!].
		\] 
Assume $p\geq e+1$. We prove that $\pi^{-1}\log(1+\pi T)\in \Ou[\![T]\!]$ and indeed in $\Ou\langle T\rangle$. It suffices to check that $v_\pi(a_j)\geq 0$ tends to infinity as $j$ tends to infinity.
Let $r>0$ and note that 
$v_\pi(a_{p^r})	=
p^r-1-er\geq 0$ since 
\[ p^r-1=(p-1)(p^{r-1}+\dots +1) \geq er.
\] 	
Further	$v_\pi(a_{p^r})<v_\pi(a_{p^{r+1}})$ and for $p^r\leq j<p^{r+1}$ we have 
\[v_\pi(a_{p^r}) = p^r-1-er
\leq j-1-v_\pi(j)=v_\pi(a_j) .	\]
Hence $\pi^{-1}\log(1+\pi T)\in \Ou\langle T\rangle$ and it defines a morphism 
of formal group schemes $ \GG^\form_m\{1\}\to \hG$.
It is an isomorphism under the stronger hypothesis that $p\geq e+2$. Indeed the inverse of $\pi^{-1}\log(1+\pi T)$ is \[
\pi^{-1}(\exp(\pi T)-1)=\sum_{j\geq 1} \frac{(\pi T)^{j}}{j!}\in K[\![T]\!],\]
and the $\pi$-adic valuation of the $j$-th coefficient is
\[ v_\pi(\pi^j/j!)=
j-v_\pi(j!)=j-e\cdot \frac{j-s_p(j)}{p-1}=
\frac{j(p-1-e)+es_p(j)}{p-1},
\]
where $s_p(j)$ denotes the sum of the digits in the base-$p$ expansion of $j$.
Clearly if $p\geq e+2$ this valuation tends to infinity as $j$ tends to infinity and hence $\pi^{-1}(\exp(\pi T)-1)
\in \Ou\langle T\rangle$. Then, if 
$p\geq e+2$, one concludes that $N^1 \hGm\simeq \hG$ and, with arguments as in \eqref{exNnG1}, that $N^n \hGm\simeq \wh{\WW}_{n-1}$.
\end{enumerate}	 
\end{examples}

The next result is an extension of \cite[Lemma 2.3]{bui95}.

\begin{lemma}\label{l.fn}
	Let $\mathcal F$ be a commutative formal group law over $R$ of dimension $d$. If $n(p-1)\geq e+1$ then $\mathcal F\{n\}\simeq (\hG)^d$ as formal group schemes over $R$. 
\end{lemma}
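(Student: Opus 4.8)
The plan is to construct the isomorphism explicitly from the logarithm, generalizing the computation carried out for $G=\hGm$ in Example~\ref{ex.NnG}. Over $R[1/\pi]$ (which contains $\QQ$) the commutative formal group law $\mathcal F$ admits a unique logarithm $\ell=(\ell_1,\dots,\ell_d)$, i.e. a $d$-tuple of power series with $\ell(\bx)=\bx+(\text{higher order})$ and $\ell(\mathcal F(\bx,\by))=\ell(\bx)+\ell(\by)$. A direct check shows that
\[
\ell_{\{n\}}(\bx):=\pi^{-n}\ell(\pi^n\bx)
\]
is the logarithm of $\mathcal F\{n\}=\pi^{-n}\mathcal F(\pi^n\bx,\pi^n\by)$, hence a homomorphism $\mathcal F\{n\}\to(\hG)^d$ of formal group laws over $R[1/\pi]$ with linear part the identity. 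The lemma will follow once I show that, under the hypothesis $n(p-1)\geq e+1$, both $\ell_{\{n\}}$ and its compositional inverse have coefficients in $R$ tending $\pi$-adically to $0$, i.e. lie in $R\langle\bx\rangle^d$; then $\ell_{\{n\}}$ is an isomorphism of formal schemes $\Spf(R\langle\bx\rangle)\xrightarrow{\sim}\Spf(R\langle\bx\rangle)$ respecting the group laws.

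The key input is the classical bound on the denominators of the logarithm: writing $\ell=\sum_\alpha c_\alpha\bx^\alpha$ with $c_\alpha\in R[1/\pi]$, one has $\pi^{e\lfloor\log_p m\rfloor}c_\alpha\in R$ for every monomial of degree $m=|\alpha|$ (over a $\ZZ_{(p)}$-algebra the denominator of a degree-$m$ coefficient of the logarithm divides $p^{\lfloor\log_p m\rfloor}$, and $p=\pi^e\cdot\text{unit}$ in $\Ou$). Consequently the degree-$m$ coefficient of $\ell_{\{n\}}$, namely $\pi^{n(m-1)}c_\alpha$, has $\pi$-valuation at least $n(m-1)-e\lfloor\log_p m\rfloor$. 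Here I would use the elementary inequality $m-1\geq(p-1)\lfloor\log_p m\rfloor$ (since $p^k\le m$ forces $m-1\ge p^k-1=(p-1)(1+p+\dots+p^{k-1})\ge(p-1)k$); combined with $n(p-1)\geq e+1$ it gives
\[
n(m-1)-e\lfloor\log_p m\rfloor\ \geq\ \bigl(n(p-1)-e\bigr)\lfloor\log_p m\rfloor\ \geq\ \lfloor\log_p m\rfloor\ \geq 0
\]
for $m\geq p$, while for $2\le m<p$ the bound reads $n(m-1)\geq n\geq 1$. Thus all these valuations are $\ge 0$ and tend to $+\infty$ with $m$, so $\ell_{\{n\}}\in R\langle\bx\rangle^d$; moreover every nonlinear coefficient lies in $\pi R$, that is $\ell_{\{n\}}\equiv\bx\pmod\pi$.

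Finally I would produce the inverse without a separate estimate on the exponential. Since $\ell_{\{n\}}(\bx)=\bx+h(\bx)$ with $h\in\pi R\langle\bx\rangle^d$ having neither constant nor linear term, the iteration $\psi_0=\by$, $\psi_{k+1}=\by-h(\psi_k)$ stays in $R\langle\by\rangle^d$ and is $\pi$-adically Cauchy: if $\psi_k\equiv\psi_{k-1}\pmod{\pi^j}$ then $h(\psi_k)\equiv h(\psi_{k-1})\pmod{\pi^{j+1}}$, because the coefficients of $h$ lie in $\pi R$. As $R\langle\by\rangle$ is $\pi$-adically complete, the iteration converges to the compositional inverse $\psi\in R\langle\by\rangle^d$ of $\ell_{\{n\}}$, whence $\ell_{\{n\}}$ is an isomorphism of formal group schemes $\mathcal F\{n\}\xrightarrow{\sim}(\hG)^d$ over $R$. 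The main obstacle is the first step: pinning down the $\pi$-adic denominator bound for the logarithm of an arbitrary $d$-dimensional formal group over the ramified base $R$ (the case $G=\hGm$, where $\ell$ and its inverse are $\pi^{-1}\log(1+\pi\bx)$ and $\pi^{-1}(\exp(\pi\bx)-1)$, is exactly Example~\ref{ex.NnG} and already shows the bound is sharp). Once this is in hand, the inverse comes for free from $\pi$-adic completeness, which is why no condition stronger than $n(p-1)\geq e+1$ is needed (for $n=1$ this recovers the threshold $p\geq e+2$ of Example~\ref{ex.NnG}).
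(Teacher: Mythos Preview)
Your proposal is correct and follows essentially the same route as the paper, which simply observes that Buium's proof in \cite[Lemma~2.3]{bui95} goes through verbatim over the general base $R$: the key point, via Hazewinkel's results valid over any $\ZZ_{(p)}$-algebra, is the explicit $\pi$-adic estimate on the coefficients of the logarithm and exponential of $\mathcal F\{n\}$, showing they lie in $\pi R$. Your denominator bound $v_\pi(c_\alpha)\ge -e\lfloor\log_p|\alpha|\rfloor$ is exactly this input (it follows from the integrality of the invariant differentials, hence of $\partial\ell_i/\partial x_j$), and your inequality $n(m-1)-e\lfloor\log_p m\rfloor\ge\lfloor\log_p m\rfloor$ is the required estimate.

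The one genuine difference is how you obtain the inverse. The paper (following Buium) treats the exponential by a second direct estimate, whereas you deduce it from the single fact $\ell_{\{n\}}\equiv\bx\pmod{\pi}$ via a contraction-mapping argument in the $\pi$-adically complete ring $R\langle\by\rangle$. This is a clean simplification: once the nonlinear part of $\ell_{\{n\}}$ is shown to lie in $\pi R\langle\bx\rangle^d$, invertibility is automatic, so only the logarithm needs to be estimated. Note that the composition $h(\psi_k)$ is well defined because the coefficients of $h$ tend to $0$ $\pi$-adically, so $\sum_\alpha a_\alpha\psi_k^{\alpha}$ converges in $R\langle\by\rangle$ regardless of whether $\psi_k$ is topologically nilpotent.
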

\begin{proof}
	In \cite[Lemma 2.3]{bui95} $R$ is a complete discrete valuation ring with algebraically closed residue field. The proof in our hypothesis works the same. Indeed Buium applies results in \cite{haz} that are valid for any $\ZZ_{(p)}$-algebra and the key-point is showing that the coefficients of the logarithm and exponential series of $\mathcal F\{n\}$ over $R[1/p]$ are indeed in $\pi R$. This is done by explicit estimates for the $\pi$-valuation of those coefficients.
\end{proof}
 
\begin{theorem} 
	\label{t.Nn}

Let $G$ be a smooth commutative formal group scheme of relative dimension $d$ over $\Spf(R)$. Assume $p\geq e+2$. Then there is a natural isomorphism of formal group schemes 
\[N^n G\simeq (\wh{\WW}_{n-1})^d .\]

\end{theorem}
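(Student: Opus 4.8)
The plan is to combine the structural description of $N^nG$ as a jet space with the two twisting lemmas established above, so that the theorem reduces to a short bookkeeping argument. By Theorem~\ref{t.Njet} we already have $N^nG\simeq J^{n-1}(N^1G)$ as formal group schemes over $R$; hence the whole problem splits into two independent tasks: first identify $N^1G$, and then push that identification through the functor $J^{n-1}$.

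First I would pin down $N^1G$. Fixing an \'etale coordinate system $\AA=\Spf(R\langle\bx\rangle)$ around the identity section of $G$, let $\mathcal F\in R[\![\bx,\by]\!]$ be the resulting formal group law on $G^\form$. By Lemma~\ref{l.N1Gfgrouplaw}, together with the identification recalled in the ``special case'' subsection, $N^1G$ is, as a formal group scheme over $R$, the formal affine space $\Spf(R\langle\bx'\rangle)$ of dimension $d$ endowed with the group law $\mathcal F^\phi\{1\}=\pi^{-1}\mathcal F^\phi(\pi\bx,\pi\by)$, where $\mathcal F^\phi$ is obtained from $\mathcal F$ by applying $\phi$ to the coefficients.

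Next I would invoke Lemma~\ref{l.fn}, applied to the $d$-dimensional formal group law $\mathcal F^\phi$ with $n=1$. Its hypothesis $n(p-1)\geq e+1$ specializes to $p-1\geq e+1$, which is exactly the standing assumption $p\geq e+2$; the lemma therefore produces an isomorphism $\mathcal F^\phi\{1\}\simeq(\hG)^d$ of formal group schemes over $R$, and hence $N^1G\simeq(\hG)^d$. Finally I would transport this through the jet functor: since $J^{n-1}$ is described on points by $J^{n-1}X(C)=X(W_{n-1}(C))$ it commutes with finite products, so $N^nG\simeq J^{n-1}(N^1G)\simeq J^{n-1}\big((\hG)^d\big)\simeq\big(J^{n-1}\hG\big)^d$, and the isomorphism \eqref{e.jetGa} gives $J^{n-1}\hG\simeq\wh{\WW}_{n-1}$, whence $N^nG\simeq(\wh{\WW}_{n-1})^d$.

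The genuinely delicate point, and the one I would spend the write-up on, is the bookkeeping of \emph{group-scheme} structures rather than mere formal affine schemes: one must check that the identification $N^1G\simeq\mathcal F^\phi\{1\}$ and the twisting isomorphism of Lemma~\ref{l.fn} both respect comultiplication, and that the resulting chain is natural in $G$ (the logarithm/exponential used in Lemma~\ref{l.fn} is canonical, which is what makes naturality plausible). Everything else is routine: the numerics $p\geq e+2\Leftrightarrow p-1\geq e+1$ and the product-preservation of $J^{n-1}$ require no computation.
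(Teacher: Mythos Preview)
Your proposal is correct and follows essentially the same route as the paper: reduce to $N^1G$ via Theorem~\ref{t.Njet}, identify $N^1G$ with $\mathcal F^\phi\{1\}$ via Lemma~\ref{l.N1Gfgrouplaw}, use Lemma~\ref{l.fn} under the hypothesis $p\geq e+2$ to get $(\hG)^d$, and then apply $J^{n-1}$ together with \eqref{e.jetGa}. The only cosmetic difference is that the paper applies Lemma~\ref{l.fn} to $\mathcal F$ rather than to $\mathcal F^\phi$, using the observation $\mathcal F^\phi\{1\}=(\mathcal F\{1\})^\phi$ (since $\phi(\pi)=\pi$) and the fact that $(\hG)^\phi=\hG$; your direct application of the lemma to $\mathcal F^\phi$ is equally valid since $\mathcal F^\phi$ is itself a formal group law over $R$.
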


\begin{proof}
	Let $G^\form$ be the formal completion of $G$ along the unit section $\Spf(R)\to G$. Let $\mathcal F\in R[[x_1,\dots, x_d,y_1,\dots, y_d]]$ be the formal group law on $ G^\form$, ${\mathcal F}^\phi$ the one obtained by acting the coefficients of $\mathcal F$ by $\phi$ and ${\mathcal F}^\phi\{1\}\colon =\pi^{-1}\mathcal F^\phi(\pi x_.,\pi y_.)$.
	By Lemma \ref{l.N1Gfgrouplaw} (see also \cite[Lemma 2.2]{bui95}) it is $N^1 G\simeq {\mathcal F}^\phi\{1\}$ as formal group schemes. Note that since $\phi(\pi)=\pi$ it is ${\mathcal F}^\phi\{1\}=({\mathcal F}\{1\})^\phi$.
	Now by hypothesis and Lemma \ref{l.fn} we have ${\mathcal F}\{1\}\simeq (\hG)^d$. Hence ${\mathcal F}\{1\}^\phi\simeq ((\hG)^d)^\phi=(\hG)^d$ and hence $N^1 G\simeq (\hG)^d $. By Theorem \ref{t.Njet} and definition of $J^{n-1}$	it is 
	\[N^n G\simeq J^{n-1}(\hG)^d\simeq (J^{n-1}\hG)^d \simeq (\wh{\WW}_{n-1})^d . 
	\]
\end{proof}

\begin{remark}
Assume $R=\Ou$, $p>2$ and let $G$ be as in the previous theorem. Then passing to limit on $n$ we have a short exact sequence
\[
	0\to N^\infty G(k)\to J^\infty G(k)=G(\Ou) \to G(k)\to 0
\]
where $G(\Ou) \to G(k)$ is the reduction map, and we recover the fact that the kernel of the reduction map is isomorphic to $\Ou^d=W(k)^d$ as groups.
\end{remark}

\section{$p$-power torsion}\label{sec:tors}

For any formal commutative $R$-group scheme $G$ let $G[p^\nu]$ denote the kernel of the multiplication by $p^\nu$ on $G$ and let $G[p^\infty]$ be the sheaf on $\Nilp_R^{\rm op}$ such that \begin{equation}\label{e.defGp}
G[p^\infty](C)=\varinjlim_\nu G[p^\nu](C) \end{equation} 
for any $C$ in $\Nilp_R$. Note that $G[p^\infty]$ is a sheaf since the above colimit commutes with equalizers \cite[04AX]{st}.
If $G$ is a formal torus or a formal abelian scheme $G[p^\infty]$ is $p$-divisible, but not in general. 

For each $\nu>0$ the closed immersions 
$G[p^\nu] \inj G[p^{\nu+1}]$ 
induce closed immersions of $\pi$-jets $J^n(G[p^\nu]) \inj J^n(G[p^{\nu+1}])$ \cite[Proposition~1.7]{bui00}, \cite[Lemma ~3.8, Theorem~3.9]{bps}.
We can naturally pass to limit on $\nu$.

\begin{lemma}\label{l.JnGlimit}
	Let the notation be as above. Then	\begin{equation*} J^n(G[p^\infty])= \varinjlim_\nu J^n(G[p^\nu]) = J^n(G)[p^\infty] 	\end{equation*}
	as sheaves on $\Nilp_R^{\rm op}$. 
\end{lemma}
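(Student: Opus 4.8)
The plan is to verify that all three objects agree by evaluating sectionwise, using the functorial description $J^nX(C)=X(W_n(C))$ from \eqref{e.defJn} together with the definition \eqref{e.defGp} of the $p^\infty$-torsion as a filtered colimit. Fix $C\in\Nilp_R$. By definition of the jet functor and of $G[p^\infty]$,
\[
J^n(G[p^\infty])(C)=G[p^\infty](W_n(C))=\varinjlim_\nu G[p^\nu](W_n(C))=\varinjlim_\nu J^n(G[p^\nu])(C),
\]
where the middle equality is the sectionwise value of the sheaf $G[p^\infty]$ at the object $W_n(C)\in\Nilp_R$, and the last equality again uses $J^n(G[p^\nu])(C)=G[p^\nu](W_n(C))$. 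This already identifies the first two objects, provided we know that the presheaf colimit $\varinjlim_\nu J^n(G[p^\nu])$ is a sheaf, so that its sections are computed sectionwise; see the last paragraph.

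For the third object I would use that the torsion subgroups are kernels computed sectionwise. Since the group structure on $J^nG$ is the one induced by the identification $J^nG(D)=G(W_n(D))$, multiplication by $p^\nu$ on $J^nG(C)=G(W_n(C))$ coincides with multiplication by $p^\nu$ on $G(W_n(C))$. Hence
\[
J^n(G[p^\nu])(C)=G[p^\nu](W_n(C))=\ker\bigl(p^\nu\colon G(W_n(C))\to G(W_n(C))\bigr)=(J^nG)[p^\nu](C),
\]
so $J^n(G[p^\nu])=(J^nG)[p^\nu]$ as sheaves, compatibly with the transition maps (both systems are induced by the torsion inclusions $G[p^\nu]\inj G[p^{\nu+1}]$). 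Passing to the colimit over $\nu$ and invoking \eqref{e.defGp} for $J^nG$ then gives $\varinjlim_\nu J^n(G[p^\nu])=(J^nG)[p^\infty]$.

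The only point requiring care, and the step I would treat as the main obstacle, is that the presheaf colimit $\varinjlim_\nu J^n(G[p^\nu])$ is already a sheaf, so that the sectionwise equalities above are equalities of sheaves with no sheafification intervening. This holds for exactly the same reason as for $G[p^\infty]$ itself: the colimit is filtered, and filtered colimits commute with the finite limits (products and equalizers) appearing in the sheaf condition for the Zariski topology on $\Nilp_R^{\rm op}$ \cite[04AX]{st}. With this in hand, the three displayed presheaves coincide sectionwise and are all sheaves, which completes the proof.
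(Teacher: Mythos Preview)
Your proof is correct and follows essentially the same approach as the paper: both compute sectionwise using $J^nX(C)=X(W_n(C))$ and the definition of $G[p^\infty]$, and both identify $J^n(G[p^\nu])$ with $(J^nG)[p^\nu]$ before passing to the colimit. The only cosmetic difference is that the paper justifies the latter identification by the abstract principle that $J^n$, being a right adjoint, is left exact, whereas you verify it directly by noting that multiplication by $p^\nu$ on $J^nG(C)=G(W_n(C))$ is just multiplication by $p^\nu$ on $G(W_n(C))$.
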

\begin{proof} Recall \eqref{e.defJn} and \eqref{e.defGp}. For any $C$ in $\Nilp_R $ it is
	\[J^n(G[p^\infty])(C)= G[p^\infty](W_n(C))=
	\varinjlim_\nu G[p^\nu](W_n(C)) =
	\varinjlim_\nu J^n(G[p^\nu])(C).
	\]
Whence the first isomorphism. The second one follows by the fact that $J^n$ is left exact (being a right adjoint) and hence 
\[ 
\varinjlim_\nu J^n(G[p^\nu])(C)=
\varinjlim_\nu ((J^n G)[p^\nu])(C)= 
(J^n(G)[p^\infty])(C).
\]
	\end{proof}

Recall that $N^nF$ is defined as $\ker(J^nF\to F)$ for any sheaf of groups on $\Nilp_R^{\rm op}$. 
We then deduce from Lemma \ref{l.JnGlimit} the following result. 

\begin{lemma}\label{l.NnGlimit}
Let notation be as above. Then	\begin{equation*} N^n(G[p^\infty])= \varinjlim_\nu N^n(G[p^\nu]) = (N^nG)[p^\infty] 	\end{equation*}
		as sheaves on $\Nilp_R^{\rm op}$. 
	\end{lemma}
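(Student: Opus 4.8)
The plan is to follow the proof of Lemma~\ref{l.JnGlimit} almost verbatim, with the left-exact functor $J^n$ replaced by $N^n=\ker(J^n(-)\to -)$, which is itself left exact. Two facts will drive the argument: that $N^n$ preserves kernels, and that filtered colimits commute with kernels in the category of groups. I would treat the two equalities separately.

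First I would establish that $N^n$ is left exact and deduce the per-level identity $N^n(G[p^\nu])=(N^nG)[p^\nu]$. The left exactness can be obtained by a short diagram chase from the already-established left exactness of $J^n$: if $A=\ker(B\to C)$, then $J^nA=\ker(J^nB\to J^nC)$, and comparing the rows $0\to N^nF\to J^nF\xrightarrow{u}F$ for $F\in\{A,B,C\}$ while using that $A\inj B$ identifies $\ker(N^nB\to N^nC)$ with $\{x\in J^nA:\ u(x)=0\}=N^nA$. (Conceptually the same follows from Theorem~\ref{t.adj0}, since $X\mapsto N^nX(B)=\Hom_{R\text{-pt}}(\Spf(W_n^+(B)),X)$ is of the form $\Hom(\Spf(W_n^+(B)),-)$ and hence preserves limits.) Applying this to $0\to G[p^\nu]\to G\xrightarrow{p^\nu}G$, and noting that $N^n$ carries multiplication by $p^\nu$ to multiplication by $p^\nu$, gives
\[
N^n(G[p^\nu])=\ker\big(p^\nu\colon N^nG\to N^nG\big)=(N^nG)[p^\nu].
\]
Passing to the filtered colimit over $\nu$ then yields the second equality $\varinjlim_\nu N^n(G[p^\nu])=\varinjlim_\nu(N^nG)[p^\nu]=(N^nG)[p^\infty]$.

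For the first equality I would argue sectionwise. By definition $N^n(G[p^\infty])=\ker(J^n(G[p^\infty])\to G[p^\infty])$, and by Lemma~\ref{l.JnGlimit} together with the definition of $G[p^\infty]$ both $J^n(G[p^\infty])$ and $G[p^\infty]$ are the sectionwise filtered colimits of $J^n(G[p^\nu])$ and $G[p^\nu]$. Since filtered colimits commute with kernels in $\mathbf{Grp}$, for every $C\in\Nilp_R$ one gets
\[
N^n(G[p^\infty])(C)=\ker\Big(\varinjlim_\nu J^n(G[p^\nu])(C)\to\varinjlim_\nu G[p^\nu](C)\Big)=\varinjlim_\nu N^n(G[p^\nu])(C),
\]
which is the first equality at the level of sections.

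The delicate point will be the bookkeeping around sheafification in the first equality: a priori the presheaf colimit of the sheaves $N^n(G[p^\nu])$ need not be a sheaf. This is exactly what Lemma~\ref{l.JnGlimit} resolves, since there the colimits defining $J^n(G[p^\infty])$ and $G[p^\infty]$ are already known to be sheaves computed sectionwise; because the left-hand side $N^n(G[p^\infty])$ is a sheaf (a kernel of a map of sheaves), the sectionwise colimit on the right is automatically a sheaf and agrees with the sheaf-theoretic colimit, so both equalities hold on $\Nilp_R^{\rm op}$.
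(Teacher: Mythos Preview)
Your proposal is correct and follows essentially the same route as the paper: left exactness of $N^n$ (the paper invokes Theorem~\ref{t.adj0} directly, while you also sketch the elementary diagram chase) gives $N^n(G[p^\nu])=(N^nG)[p^\nu]$, and the remaining identification comes from Lemma~\ref{l.JnGlimit} together with the commutation of filtered colimits and kernels. Your extra remarks on sheafification make explicit what the paper leaves tacit, but the argument is the same.
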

 \begin{proof} By Theorem \ref{t.adj0} the functor $N^n$ is left exact; hence $(N^nG)[p^\nu]=N^n(G[p^\nu])$ and
 \[ (N^nG)[p^\infty]:=\varinjlim_\nu (N^nG)[p^\nu]=\varinjlim_\nu N^n(G[p^\nu]).
 \]
 Further, $(N^nG)[p^\infty]=\ker((J^nG)[p^\infty]\to G[p^\infty])
= \ker(J^n(G[p^\infty])\to G[p^\infty])=N^n(G[p^\infty])$.
 \end{proof}

We say that a commutative formal $R$-group scheme is \emph{triangular} if it admits a finite filtration by formal subgroup schemes whose successive quotients are isomorphic to $\hG$. It is called \emph{triangular of level $0$} in \cite[p. 322]{bui95}. The formal $R$-group scheme $\wh{\WW}_n$ is triangular.

\begin{lemma} 
	\label{l.triang1}
	If $H$ is a triangular formal $R$-group scheme then $H[p^\infty]=H$ as sheaves on $\Nilp_R^{\rm op}$.
\end{lemma}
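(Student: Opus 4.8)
The plan is to prove the stronger \emph{pointwise} statement: for every $C\in\Nilp_R$, every element of $H(C)$ is annihilated by some power of $p$. This suffices, because by definition $H[p^\infty](C)=\varinjlim_\nu H[p^\nu](C)$, and since the multiplication-by-$p^\nu$ kernel is computed on sections we have $H[p^\nu](C)=\{h\in H(C)\mid p^\nu h=0\}$; hence $H[p^\infty](C)$ is exactly the subgroup of $p$-power torsion of $H(C)$, and the pointwise claim gives $H[p^\infty]=H$ as sheaves.

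First I would dispose of the base case $H=\hG=\Spf(R\langle x\rangle)$. For $C\in\Nilp_R$ one has $\hG(C)=\varinjlim_n\Hom_R(R\langle x\rangle/\pi^n,C)=C$ as a set, and the comultiplication $x\mapsto x\otimes 1+1\otimes x$ makes this the additive group $(C,+)$; thus multiplication by $p$ on $\hG(C)$ is ordinary multiplication by $p$ in the ring $C$. Since $p$ (equivalently $\pi$) is nilpotent in $C$, every element of $C$ is $p$-power torsion, so $\hG[p^\infty]=\hG$.

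Next I would induct on the length $m$ of a triangular filtration $0=H_0\subset H_1\subset\cdots\subset H_m=H$ with $H_i/H_{i-1}\simeq\hG$. The case $m=1$ is the base case. For the inductive step set $H'=H_{m-1}$, which is triangular of length $m-1$, and use the short exact sequence of sheaves of groups
\[
0\to H'\to H\stackrel{f}{\longrightarrow}\hG\to 0 .
\]
Given $h\in H(C)$, its image $f(h)\in\hG(C)$ is killed by some $p^a$ by the base case, so $p^a h$ lies in the kernel of $f$ on $C$-points. Because the kernel of a morphism of sheaves is computed sectionwise, this kernel is $H'(C)$, so $p^a h\in H'(C)$; by the induction hypothesis $p^a h$ is killed by a further power $p^b$, whence $p^{a+b}h=0$ and $h$ is $p$-power torsion.

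The only delicate point — the ``main obstacle'' such as it is — is that the short exact sequence above need not stay exact on $C$-points, since $H(C)\to\hG(C)$ may fail to be surjective. The argument is arranged to sidestep this: I never attempt to lift an element of $\hG(C)$ to $H(C)$, but instead push $h$ forward, annihilate its image, and invoke only the sectionwise computation of kernels to land $p^a h$ in $H'(C)$. With this single observation the induction closes with no further input.
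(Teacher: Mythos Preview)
Your proof is correct and follows essentially the same approach as the paper: induction on the length of the triangular filtration, with the base case $\hG$ handled by nilpotence of $p$ on $C$, and the inductive step via the short exact sequence $0\to H'\to H\to\hG\to 0$. The paper's proof is terser (``one concludes by induction hypothesis''), whereas you spell out the argument on $C$-points and explicitly flag that only left exactness on sections is needed; this is a useful elaboration but not a different idea.
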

\begin{proof}
	We proceed by induction on the length $m$ of the filtration. 
	The result is clearly true for $H=\hG$ since $p$ is nilpotent in any object $C$ of $\Nilp_R$.
	Assume $m>1$. Then $H$ is extension of $\hG$ by $G$ where $G$ is a triangular formal group scheme with a filtration of length $m-1$. One concludes by induction hypothesis.
\end{proof}
Moreover, we have the following stronger result.

 \begin{theorem}\label{t.triang3} Assume $G$ is a smooth commutative formal $R$-group scheme and $p\geq e+2$. Then $N^n(G[p^\infty])= N^n G$ as sheaves on $\Nilp_R^{\rm op}$.
 \end{theorem}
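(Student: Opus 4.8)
The plan is to reduce the statement to two previous results: the explicit computation of $N^n G$ in Theorem~\ref{t.Nn} and the control of $p^\infty$-torsion for triangular groups in Lemma~\ref{l.triang1}. First I would invoke Lemma~\ref{l.NnGlimit}, which gives $N^n(G[p^\infty]) = (N^n G)[p^\infty]$ as sheaves on $\Nilp_R^{\rm op}$. This reduces the theorem to proving the single identity $(N^n G)[p^\infty] = N^n G$, i.e.\ that $N^n G$ coincides with its own $p^\infty$-torsion subsheaf.

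Second, I would use the hypothesis $p \geq e+2$ to apply Theorem~\ref{t.Nn}, which provides a natural isomorphism of formal group schemes $N^n G \simeq (\wh{\WW}_{n-1})^d$, where $d$ is the relative dimension of $G$. The remaining task is then purely about this explicit group. I would observe that a finite power of a triangular formal $R$-group scheme is again triangular: one concatenates the given filtrations on the $d$ factors, whose successive quotients are all isomorphic to $\hG$. Since $\wh{\WW}_{n-1}$ is triangular (as noted just before Lemma~\ref{l.triang1}), it follows that $(\wh{\WW}_{n-1})^d$ is triangular as well.

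Third, I would apply Lemma~\ref{l.triang1} to the triangular group $H = (\wh{\WW}_{n-1})^d$, obtaining $H[p^\infty] = H$ as sheaves on $\Nilp_R^{\rm op}$. Transporting this along the isomorphism of Theorem~\ref{t.Nn} yields $(N^n G)[p^\infty] = N^n G$. Combining the three steps gives $N^n(G[p^\infty]) = (N^n G)[p^\infty] = N^n G$, as required.

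There is essentially no serious obstacle here; the argument is a direct assembly of the preceding results. The only point that warrants an explicit (though routine) remark is that Lemma~\ref{l.triang1} is stated for a single triangular group, so I would make clear that a finite power of a triangular group remains triangular before applying it to $(\wh{\WW}_{n-1})^d$. I would also note that the hypothesis $p \geq e+2$ is used solely to guarantee the identification of Theorem~\ref{t.Nn}; without it the structure of $N^n G$ need not be triangular and the conclusion could fail.
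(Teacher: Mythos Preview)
Your proposal is correct and follows essentially the same route as the paper: invoke Lemma~\ref{l.NnGlimit} to reduce to $(N^nG)[p^\infty]=N^nG$, use Theorem~\ref{t.Nn} to see that $N^nG\simeq(\wh{\WW}_{n-1})^d$ is triangular, and conclude via Lemma~\ref{l.triang1}. The only addition you make is the explicit (and harmless) remark that a finite power of a triangular group is triangular, which the paper leaves implicit.
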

 \begin{proof}
 	By {Theorem \ref{t.Nn}}, the formal $R$-group scheme $N^n G$ is triangular. Hence the result follows by Lemmas \ref{l.NnGlimit} and \ref{l.triang1}. 
\end{proof}

\begin{corollary}\label{c.pinf}
	Assume $p\geq e+2$. The exact sequence \eqref{eq:cwse} induces an exact sequence of formal groups
	\begin{equation} \label{e.pinf}
	0 \map N^n G [p^\infty] \map J^n G[p^\infty] \map G[p^\infty]\map 0,
	\end{equation}
	for any $n\ge 0$.
\end{corollary}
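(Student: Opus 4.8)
The plan is to apply the $p^\infty$-torsion functor $(-)[p^\infty]=\varinjlim_\nu(-)[p^\nu]$ to the canonical short exact sequence \eqref{eq:cwse} and to verify exactness term by term. Since $(-)[p^\nu]=\ker(p^\nu)$ is left exact and filtered colimits of sheaves on $\Nilp_R^{\rm op}$ are exact (the presheaf colimit already being a sheaf, as recalled for $G[p^\infty]$), the composite $(-)[p^\infty]$ is left exact. Applying it to \eqref{eq:cwse}, and invoking the identifications $J^nG[p^\infty]=J^n(G[p^\infty])$ and $N^nG[p^\infty]=N^n(G[p^\infty])$ furnished by Lemmas \ref{l.JnGlimit} and \ref{l.NnGlimit}, I obtain at once the exactness of \eqref{e.pinf} at its first two terms: the map $N^nG[p^\infty]\to J^nG[p^\infty]$ is injective and its image is exactly the kernel of $J^nG[p^\infty]\to G[p^\infty]$. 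Thus everything reduces to proving surjectivity of the last map.

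For that surjectivity the decisive input is Theorem \ref{t.triang3}, which under the hypothesis $p\geq e+2$ asserts $N^nG=N^nG[p^\infty]$; indeed by Theorem \ref{t.Nn} the kernel $N^nG\simeq(\wh{\WW}_{n-1})^d$ is triangular, hence $p^\infty$-torsion. I would argue locally on the site. Let $C\in\Nilp_R$ and $s\in G[p^\infty](C)$, so that $p^\nu s=0$ for some $\nu$. Because \eqref{eq:cwse} is exact, $J^nG\to G$ is an epimorphism of sheaves, so after passing to a cover of $C$ the section $s$ lifts to some $\tilde s\in J^nG$. Then $p^\nu\tilde s$ maps to $p^\nu s=0$ in $G$, so $p^\nu\tilde s$ is a local section of $N^nG$; since $N^nG$ is $p^\infty$-torsion, a further refinement of the cover kills it, say $p^\mu(p^\nu\tilde s)=0$. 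Hence $\tilde s$ becomes a local section of $J^nG[p^{\mu+\nu}]\subseteq J^nG[p^\infty]$ lifting $s$, and therefore $J^nG[p^\infty]\to G[p^\infty]$ is a sheaf epimorphism, completing the exactness of \eqref{e.pinf}.

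I expect the only step demanding genuine care to be this last surjectivity: left exactness of the torsion functor is formal, whereas right exactness fails in general and is rescued here precisely because the kernel is $p^\infty$-torsion, which forces any local lift of a torsion section to itself be torsion after refinement. The hypothesis $p\geq e+2$ never enters the homological bookkeeping directly; it is used solely through Theorems \ref{t.Nn} and \ref{t.triang3} to guarantee that $N^nG$ is triangular and hence $p^\infty$-torsion. Once that structural fact is available, the local lifting argument closes the proof, and combining \eqref{e.pinf} with the isomorphism $N^nG[p^\infty]\simeq(\wh{\WW}_{n-1})^d$ recovers the Main Theorem.
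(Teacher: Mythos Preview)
Your proof is correct and takes essentially the same approach as the paper: the only nontrivial point is surjectivity, for which you lift a torsion section of $G$ locally to $J^nG$, observe that $p^\nu$ times the lift lies in $N^nG$, and use Theorem \ref{t.triang3} to conclude the lift is itself $p$-power torsion. One small simplification: the ``further refinement of the cover'' is unnecessary, since $N^nG=N^nG[p^\infty]$ holds pointwise (Lemma \ref{l.triang1} gives $H(C')=H[p^\infty](C')$ for every $C'$), so $p^\nu\tilde s$ is already $p$-power torsion over the cover you started with.
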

\begin{proof}
	Only the right exactness needs to be proved. Let $s$ be a section of $G$ such that $p^ms=0$. It lifts to a section $s'$ of $J^nG$. Now $p^ms'$ comes from a section of $N^nG$ and thus is $p$-power torsion by the previous theorem. Hence $s'$ is $p$-power torsion.
\end{proof}

We are now ready to proof the Main Theorem stated in the introduction.
\begin{theorem}\label{th.WJG}
 Assume $p\geq e+2$. 
	Given a smooth commutative formal group scheme $G$ of relative dimension $d$ over 
	$R$, for any positive integers $n$ the natural morphism $J^nG\to G$ gives an exact sequence
	\[
	0 \map (\wh{\WW}_{n-1})^d \map J^n G[p^\infty] \map G[p^\infty] \map 0
	\]
as sheaves on $\Nilp_R^{\rm op}$.
\end{theorem}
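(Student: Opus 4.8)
The plan is to assemble the statement directly from the three structural results already established, with Corollary~\ref{c.pinf} supplying the exact sequence and Theorems~\ref{t.triang3} and~\ref{t.Nn} identifying its kernel. First I would invoke Corollary~\ref{c.pinf}, which—under the standing hypothesis $p\geq e+2$—yields the exact sequence
$$0 \map N^n G [p^\infty] \map J^n G[p^\infty] \map G[p^\infty]\map 0$$
of sheaves on $\Nilp_R^{\rm op}$, induced by the natural projection $J^nG\to G$. This is the only place where right exactness enters, and it is worth emphasizing that it does not come for free: since $J^n$ is merely left exact, being a right adjoint, the surjectivity onto $G[p^\infty]$ relies on the torsion-lifting argument of that corollary, which in turn uses that the kernel $N^nG$ is $p$-power torsion.

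It then remains to identify the kernel term $N^n G[p^\infty]$. Here I would first recall from Lemma~\ref{l.NnGlimit} that $N^n G[p^\infty]$, read as $(N^nG)[p^\infty]$, coincides with $N^n(G[p^\infty])$. Next, Theorem~\ref{t.triang3}, which exploits that $N^nG$ is triangular in the sense recalled before Lemma~\ref{l.triang1}, gives $N^n(G[p^\infty]) = N^nG$. Finally Theorem~\ref{t.Nn} furnishes the natural isomorphism $N^nG \simeq (\wh{\WW}_{n-1})^d$ of formal group schemes.

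Substituting the resulting chain of identifications $N^n G[p^\infty] \simeq N^nG \simeq (\wh{\WW}_{n-1})^d$ into the exact sequence from Corollary~\ref{c.pinf} produces precisely
$$0 \map (\wh{\WW}_{n-1})^d \map J^n G[p^\infty] \map G[p^\infty] \map 0,$$
as desired. I do not anticipate any genuine obstacle at this stage: all the analytic and combinatorial work has been absorbed into the cited results—in particular the estimates on the $\pi$-adic valuations of the logarithm and exponential coefficients feeding Lemma~\ref{l.fn}, hence Theorem~\ref{t.Nn}, and the triangularity feeding Theorem~\ref{t.triang3}. The only point demanding care is the bookkeeping of the identification $N^n(G[p^\infty]) = (N^nG)[p^\infty]$, so that the kernel appearing in Corollary~\ref{c.pinf} is indeed the same sheaf whose structure Theorem~\ref{t.Nn} describes.
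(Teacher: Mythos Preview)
Your proposal is correct and follows essentially the same route as the paper: the paper's proof is the one-line ``This follows directly by applying Theorems~\ref{t.Nn}, \ref{t.triang3} and Corollary~\ref{c.pinf},'' and your argument simply unpacks this chain of references (with the extra citation of Lemma~\ref{l.NnGlimit}, which is already absorbed into the proof of Theorem~\ref{t.triang3}).
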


\begin{proof}
	This follows directly by applying Theorems \ref{t.Nn}, \ref{t.triang3}
	and Corollary \ref{c.pinf}.	
\end{proof}
 
 We can now conclude the study of Examples \ref{ex.NnG}.
 
\begin{examples}\label{ex.NGp}
	\begin{enumerate}
		\item Assume $ G=\hG$. We have seen in Example \ref{ex.NnG}\eqref{un1} that $N^1\hG=\Spf(R\langle x'\rangle)$ is isomorphic to $\hG=\Spf(R\langle x\rangle)$ as formal group scheme mapping $x$ to $x'$ on algebras. In particular, for any $\nu\geq 1$, it is 
		\[
		N^1\hG[p^\nu]\cong \hG[p^\nu]. 
		\]
 This result can be checked directly. Indeed $\hG[p^\nu]=\Spf(R\langle x\rangle/(p^\nu x))$ and hence
		\[
		J^1\hG[p^\nu]=\Spf\left( \frac{R\langle x,x'\rangle}{(p^\nu x,\delta(p^\nu x))}\right)\ ,\quad N^1\hG[p^\nu]=\Spf\left(\frac{R\langle x'\rangle}{(p^{\nu} x')}\right)\\ ,
		\]
	since $(p^\nu x)^q+\pi\delta(p^\nu x)=\phi(p^\nu x)=p^\nu\phi(x)=p^\nu(x^q+\pi x')$. 
 Passing to limit on $\nu$ we get an isomorphism 
$	 N^1\hG[p^\infty]\cong \hG[p^\infty]$.
\item Assume $ G=\hGm$. We have seen in Example \ref{ex.NnG}\eqref{un2} that if $p\geq e+2$ it is $N^1\hGm\simeq \hG$. Hence $N^1\hGm[p^\infty]\simeq \hG[p^\infty]$.
\end{enumerate} 	
\end{examples}

\appendix
	
\section{Prolongation sequences of algebras}\label{sec:app}

Let for this section $R$ be a flat $\Ou$-algebra with a fixed $\pi$-derivation $\dd$. In particular, it has a lift of Frobenius. Let $u\colon B\to C$ be a morphism of $R$-algebras. Recall that if $u\colon B\to C$ is a morphism of $\Ou$-algebras, a $\pi$-derivation relative to $u$ is a map of sets $\partial\colon B\to C$ such that $\partial(0)=0=\partial(1)$ and for any $x,y\in B$
\begin{align}
\label{a1}	
\partial (x+y)&=\partial (x)+\partial(y)+\frac{ u(x)^q+u(y)^q-u(x+y)^q }{\pi}\\
\label{a2}	
\partial (xy)&= u(x)^q\partial(y)+u(y)^q\partial(x)+\pi \partial(x)\partial(y) \ .
\end{align}
Associated to $\partial$ there is a lift of Frobenius (relative to $u$) $\Psi\colon B \map C$ given by $\Psi(x)= u(x)^q + \pi \partial (x)$ (see \cite[\S 3.1]{borsah19}, \cite[\S~1]{bps}).

Fix a positive integer $r$. Let $B_n = R[\bxx_0,\dots, \bxx_n]$, where for each $i\geq 0$, $\bxx_i$ denotes the $r$-tuple 
of variables $x_{i,1},\dots , x_{i,r}$ and denote by $u\colon B_{n}\to B_{n+1}$ the natural inclusions. Fix a prolongation sequence
\begin{equation}\label{e.Bprol}
B=B_0\xrightarrow{u,\partial}B_1 \xrightarrow{u,\partial}B_2\to ...
\end{equation}
i.e., for any $n$ we fix a $\pi$-derivation (relative to $u$) $\partial\colon B_n\to B_{n+1}$ such that
\begin{itemize}
	\item $\partial$ is compatible with the $\pi$-derivation of $R$;
	\item $\partial\circ u=u\circ \partial$ (and thus we avoided using subscripts).
\end{itemize}
 The lift of Frobenius is then the homomorphism \begin{equation*}\label{e.psi}
\Psi\colon B_n\to B_{n+1}, \qquad \bxx_i\mapsto \bxx_i^q+\pi \partial \bxx_i,
\end{equation*}
where we used a compact notation instead of writing $x_{i,j}\mapsto x_{i,j}^q+\pi \partial x_{i,j}$ for all $i\leq n$ and $1\leq j\leq r$.
Note that we can handle all $B_n$ together by introducing $B_\infty = R[\bxx_0, \bxx_1,\dots]=\bigcup_n B_n$ and again denoting by $\partial$ the induced $\pi$-derivation associated with the identity on $B_\infty$ and by $\Psi$ the associated lift of Frobenius.

On the other hand, starting with $B=R[\bxx_0]$ we have a prolongation sequence 
\begin{equation}\label{e.B0prol}
B=B_0\xrightarrow{u,\delta}J_1B_0 \xrightarrow{u,\delta}J_2B_0\to ...
\end{equation}
with $J_nB =R[\bxx_0,\bxx_0',\dots, \bxx_0^{(n)}]$ and $\bxx_0^{(i+1)}=\delta \bxx_0^{(i)}$.
Let \[\Phi\colon J_nB \to J_{n+1}B ,\qquad \bxx_0\mapsto \bxx_0^q+\pi \bxx_0'
\] be the corresponding lift of Frobenius and define $J_\infty B = \bigcup_n J_nB $. 
 
By \cite[(2.9)]{bps} the restriction on the first component $W(J_\infty B)\to J_\infty B $ admits a homomorphic section $\exp$ such that the following diagram
\begin{equation}\label{d.exp}
\xymatrix{
	W(J_\infty B) \ar[d]\ar[r]^w & \prod_{i\in \NN}B_\infty\\
	J_\infty B\ar@/^/[u]^{\exp} \ar[ur]_{ ({\rm id},\Phi,\Phi^2,\dots)}&
}
\end{equation}
commutes, with $w$ the ghost map of ramified Witt vectors. 
Let $\exp(\bxx_0)=(\bz_0,\bz_1,\bz_2,\dots)$ so that $\bz_0=\bxx_0$, $\bz_1=\ \bxx_0'$ and for $n>1$
\begin{equation*}\label{e.condition0}
\Phi^n(\bxx_0) = \Phi^n(\bz_0)= \bz_0^{q^n}+ \pi \bz_1^{q^{n-1}} + \cdots + \pi^n \bz_n.
\end{equation*}
We will show in Theorem \ref{t.coord} that if the indeterminates $\bxx_0,\bxx_1,\dots $ satisfy the analogous property for $\Psi$, i.e., 
\begin{equation}\label{e.condition}
\Psi^n(\bxx_0) = \bxx_0^{q^n}+ \pi \bxx_1^{q^{n-1}} + \cdots + 
\pi^n \bxx_n,
\end{equation}
 then there is a unique isomorphism between the prolongation sequences \eqref{e.Bprol} and \eqref{e.B0prol}.

We start with a technical result. 
For brevity, let us define for any $n>1$ the following polynomial in $2n-2$ indeterminates
\[H_n(x_0,\dots, x_{n-2};y_0,\dots,y_{n-2}):=\!\sum_{i=0}^{n-2}\left[\sum_{j=1}^{q^{n-1-i}}\!
\pi^{i+j} \C{q^{n-1-i}}{j} x_{i}^{ q(q^{n-1-i}-j)}y_{i}^j\right]
\]
 
\begin{lemma}
	\label{l.coord1}
Let $B_*$ be the prolongation sequence in \eqref{e.Bprol} and assume that it satisfies \eqref{e.condition} for any $n$. Then 
	\[
	\bxx_n= \partial \bxx_{n-1} + H_n(\bxx_0,\dots,\bxx_{n-2};\partial \bxx_0,\dots, \partial \bxx_{n-2} )
	\]
	and hence $\bxx_n-\partial \bxx_{n-1}\in \Ou[\bxx_0,\dots,\bxx_{n-2},\partial \bxx_0,\dots, \partial \bxx_{n-2}]$ with trivial constant term.
	Similarly, in $J_\infty B$ it is
\[
\bz_n= \delta \bz_{n-1} + H_n(\bz_0,\dots,\bz_{n-2};\delta \bz_0,\dots, \delta \bz_{n-2} )
\]	 
\end{lemma}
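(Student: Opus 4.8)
The plan is to exploit the self-similarity of the defining relation \eqref{e.condition}, peeling off one application of the lift of Frobenius. Since $\Psi$ is the lift of Frobenius attached to $\partial$ on $B_\infty$, it is a ring homomorphism with $\Psi(\pi)=\pi$ and $\Psi(\bxx_i)=\bxx_i^q+\pi\partial\bxx_i$ for every $i$. Writing \eqref{e.condition} at level $n-1$ as $\Psi^{n-1}(\bxx_0)=\sum_{i=0}^{n-1}\pi^i\bxx_i^{q^{n-1-i}}$ and applying $\Psi$, I obtain
\[
\Psi^n(\bxx_0)=\sum_{i=0}^{n-1}\pi^i\bigl(\bxx_i^q+\pi\partial\bxx_i\bigr)^{q^{n-1-i}}.
\]

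Next I would expand each factor by the binomial theorem. The $j=0$ term of $\bigl(\bxx_i^q+\pi\partial\bxx_i\bigr)^{q^{n-1-i}}$ is $\bxx_i^{q^{n-i}}$, so the contributions $\pi^i\bxx_i^{q^{n-i}}$ for $0\le i\le n-1$ reproduce all but the last summand of \eqref{e.condition} at level $n$, namely $\Psi^n(\bxx_0)=\sum_{i=0}^{n}\pi^i\bxx_i^{q^{n-i}}$. Equating the two expressions and cancelling these common leading terms leaves $\pi^n\bxx_n$ on one side and, on the other, the binomial tails $\sum_{i=0}^{n-1}\sum_{j=1}^{q^{n-1-i}}\pi^{i+j}\binom{q^{n-1-i}}{j}\bxx_i^{q(q^{n-1-i}-j)}(\partial\bxx_i)^j$. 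For $i=n-1$ one has $q^{n-1-i}=1$, so its only tail is the $j=1$ term $\pi^n\partial\bxx_{n-1}$; separating it off and dividing through by $\pi^n$ gives $\bxx_n=\partial\bxx_{n-1}+H_n$, where $H_n$ collects precisely the remaining tails with $i\le n-2$. Since each such monomial carries a factor $(\partial\bxx_i)^j$ with $j\ge1$ and involves only $\bxx_0,\dots,\bxx_{n-2}$ and $\partial\bxx_0,\dots,\partial\bxx_{n-2}$, the stated variable dependence and the vanishing of the constant term are immediate.

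The step I expect to be the genuine obstacle is the integrality assertion hidden in the division by $\pi^n$: one must show that each surviving coefficient satisfies $v_\pi\bigl(\pi^{i+j}\binom{q^{n-1-i}}{j}\bigr)\ge n$. Writing $q=p^f$ and $s=n-1-i\ge1$, I would invoke Kummer's formula $v_p\binom{p^{fs}}{j}=fs-v_p(j)$ (valid for $1\le j\le q^s$), so that the required bound reduces to the elementary inequality $s(ef-1)+j-e\,v_p(j)\ge1$. Setting $a=v_p(j)$ and using $j\ge p^a$ together with $a\le fs$ (hence $s\ge a/f$), the left-hand side is at least $p^a-a/f\ge p^a-a\ge1$, which holds for every $a\ge0$ since $e,f\ge1$. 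Thus the estimate is unconditional, requiring neither $p\ge e+2$ nor any hypothesis beyond $e,f\ge1$, and it is exactly here that the combinatorics of the binomial coefficients does the work.

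Finally, the corresponding identity in $J_\infty B$ follows by the same computation verbatim. The elements $\bz_i$ satisfy the analogue \eqref{e.condition0} of \eqref{e.condition} with $\Phi$ in place of $\Psi$, and since $\Phi$ is the lift of Frobenius attached to $\delta$ one has $\Phi(\bz_i)=\bz_i^q+\pi\delta\bz_i$ on $J_\infty B$. Substituting $(\Phi,\delta,\bz_i)$ for $(\Psi,\partial,\bxx_i)$ throughout yields $\bz_n=\delta\bz_{n-1}+H_n(\bz_0,\dots,\bz_{n-2};\delta\bz_0,\dots,\delta\bz_{n-2})$ with no additional argument.
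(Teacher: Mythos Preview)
Your argument is correct and follows essentially the same route as the paper: write $\Psi^n(\bxx_0)=\Psi\bigl(\Psi^{n-1}(\bxx_0)\bigr)$, expand each $(\bxx_i^q+\pi\partial\bxx_i)^{q^{n-1-i}}$ by the binomial theorem, cancel the common terms against the level-$n$ ghost identity, isolate $\pi^n\partial\bxx_{n-1}$, and divide through by $\pi^n$; the $\bz$-statement is then obtained by the identical manipulation with $(\Phi,\delta)$ in place of $(\Psi,\partial)$, which the paper handles by citing \cite[Proposition 2.10]{bps}.

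The one place where you go beyond the paper is the integrality step. The paper simply writes ``one can divide by $\pi^n$'' and leaves the verification that each surviving coefficient $\pi^{i+j}\binom{q^{n-1-i}}{j}$ lies in $\pi^n\Ou$ to the reader; this is precisely what is needed for the ``hence $\bxx_n-\partial\bxx_{n-1}\in\Ou[\ldots]$'' clause. Your Kummer-type estimate $v_p\binom{p^{fs}}{j}=fs-v_p(j)$ together with the elementary inequality $s(ef-1)+j-e\,v_p(j)\ge 1$ fills this in cleanly and, as you note, requires no hypothesis beyond $e,f\ge 1$.
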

\begin{proof}
The second assertion is \cite[Proposition 2.10]{bps}.
The proof of the first one is similar and we write below the main steps:
	\begin{eqnarray*}
		\sum_{i=0}^n \pi^i \bxx_i^{q^{n-i}} &=& \Psi^n(\bxx_0)\\
		&=&\Psi (\Psi^{n-1}(\bxx_0)) \\
		&=& \Psi\left( \sum_{i=0}^{n-1}\pi^i \bxx_i^{q^{n-1-i}} \right)\\
		&=& \sum_{i=0}^{n-1} \pi^i \Psi(\bxx_i)^{q^{n_i}}\\
		&=& \sum_{i=0}^{n-1} \pi^i\left(\bxx_i^q + \pi \partial\bxx_i\right)^{q^{n_i}}\\
		&=& \sum_{i=0}^{n-1} \pi^i \left[ \bxx_i^{q^{n-i}} + \sum_{j=1}^{q^{n_i}}
		\C{q^{n_i}}{j}\bxx_i^{(q^{n_i}-j)q}\pi^j(\partial \bxx_i)^j \right] \\
		&=& \sum_{i=0}^{n-1}\pi^i \bxx_i^{q^{n-i}}+ \sum_{i=0}^{n-1} \left[
		\sum_{j=1}^{q^{n_i}}\pi^{i+j} \C{q^{n_i}}{j} \bxx_i^{(q^{n_i}-j)q}(\partial \bxx_i)^j\right]
	\end{eqnarray*}
	with $n_i=n-1-i$. 
	Hence cancelling the common terms on both sides of the above equality we get
	\[
	\pi^n\bxx_n = \pi^n\partial \bxx_{n-1} + \sum_{i=0}^{n-2}\left[\sum_{j=1}^{q^{n-1-i}}
	\pi^{i+j} \C{q^{n-1-i}}{j} \bxx_{i}^{ q(q^{n-1-i}-j)}(\partial \bxx_{i})^j\right] 
	\]
	and one can divide by $\pi^n$. 
\end{proof}

We now prove that prolongation sequences $B_*$ as above satisfying condition \eqref{e.condition} for all $n$ are unique up to unique isomorphism. 

\begin{theorem} 	\label{t.coord}
Let $B_*$ be the prolongation sequence in \eqref{e.Bprol}. Assume that the indeterminates $\bxx_i$ satisfy \eqref{e.condition} for all $n$ and let $\bz_i\in J_\infty R[\bx_0]$ be the elements defined just below \eqref{d.exp}.
 Then we have
 \begin{enumerate}[label={(\roman*)}]
 	\item \label{t.1} The inclusion $R[\bz_0,\dots, \bz_n]\to J_n R[\bx_0]$ is an isomorphism for any $n$.
 \item For any $n\geq 0$ the $R$-algebra homomorphism $h_n\colon J_n R[\bx_0]\to B_n,\ \bxx_0^{(i)}\mapsto \partial^i \bxx_0$ is an isomorphism and the following square
\[\xymatrix{
R\left[\bxx_0,\dots, \bxx_0^{(n)}\right] \ar[rr]^{(u,\delta)}\ar[d]^{h_n}&& R\left[\bxx_0,\dots, \bxx_0^{(n+1)}\right] \ar[d]^{h_{n+1}}
\\
R\left[\bxx_0,\dots, \bxx_n\right] \ar[rr]^{(u,\partial)} && R\left[\bxx_0,\dots, \bxx_{n+1}\right] 
}
\]
commutes, where $u$ denotes the inclusion map on both levels.
 	\end{enumerate}
 \end{theorem}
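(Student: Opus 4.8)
The plan is to prove both assertions by exhibiting the relevant change of coordinates as a \emph{triangular, unipotent} substitution, so that invertibility (hence the isomorphism statements) becomes automatic. For part \ref{t.1} the comparison is between the Witt coordinates $\bz_0,\dots,\bz_n$ coming from \eqref{d.exp} and the Buium--Joyal coordinates $\bxx_0,\dots,\bxx_0^{(n)}$ inside $J_\infty R[\bx_0]$; for part (ii) it is between $\bxx_0,\dots,\bxx_n$ and the iterated $\pi$-derivatives $\bxx_0,\partial\bxx_0,\dots,\partial^n\bxx_0$ inside $B_n$. In both cases the engine is Lemma \ref{l.coord1}: its second assertion gives $\bz_n=\delta\bz_{n-1}+H_n(\bz_0,\dots;\delta\bz_0,\dots)$ and its first gives $\bxx_n=\partial\bxx_{n-1}+H_n(\bxx_0,\dots;\partial\bxx_0,\dots)$, where in each case $H_n$ involves only the earlier variables.

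First I would prove, by induction on $n$, the triangular form
\[
\bz_n=\bxx_0^{(n)}+(\text{poly in }\bxx_0,\dots,\bxx_0^{(n-1)}),\qquad
\bxx_n=\partial^n\bxx_0+(\text{poly in }\bxx_0,\dots,\partial^{n-1}\bxx_0),
\]
with coefficients in $\Ou$ and no constant term. The base cases are immediate: $\bz_1=\delta\bz_0=\bxx_0^{(1)}$ by construction, while \eqref{e.condition} at $n=1$ forces $\bxx_1=\partial\bxx_0$. The inductive step rests on one elementary observation: applying $\partial$ (resp.\ $\delta$) to any polynomial in $\bxx_0,\dots,\partial^{n-1}\bxx_0$ (resp.\ $\bxx_0,\dots,\bxx_0^{(n-1)}$) yields, by the $\pi$-derivation rules \eqref{a1}--\eqref{a2} together with $\partial(\partial^i\bxx_0)=\partial^{i+1}\bxx_0$, a polynomial in $\bxx_0,\dots,\partial^n\bxx_0$ (resp.\ in $\bxx_0,\dots,\bxx_0^{(n)}$). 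Hence $\partial\bxx_{n-1}=\partial^n\bxx_0+(\text{lower order})$, and the extra summand $H_n$ is already of lower order by the induction hypothesis; the argument for $\bz_n$ is identical.

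Since a triangular unipotent substitution is invertible, these relations can be solved back to express $\bxx_0^{(n)}$ as a polynomial in $\bz_0,\dots,\bz_n$ and $\partial^n\bxx_0$ as a polynomial in $\bxx_0,\dots,\bxx_n$. This shows at once that $R[\bz_0,\dots,\bz_n]=J_nR[\bx_0]$, proving \ref{t.1}, and that the iterates $\bxx_0,\partial\bxx_0,\dots,\partial^n\bxx_0$ form a system of algebraically independent generators of $B_n$; consequently $h_n\colon J_nR[\bx_0]\to B_n$, $\bxx_0^{(i)}\mapsto\partial^i\bxx_0$, carries a polynomial basis to a polynomial basis and is an isomorphism.

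It remains to check that the square commutes, and this is the point requiring the most care, since $\delta$ and $\partial$ are $\pi$-derivations rather than ring maps. On generators both $h_{n+1}\circ\delta$ and $\partial\circ h_n$ send $\bxx_0^{(i)}$ to $\partial^{i+1}\bxx_0$ and both restrict to the fixed $\pi$-derivation of $R$; moreover both are $\pi$-derivations relative to the one base morphism $v\colon J_nR[\bx_0]\to B_{n+1}$ obtained equally as $h_n$ followed by the inclusion or the inclusion followed by $h_{n+1}$. I would then pass to the associated lifts of Frobenius (adding $\pi$ times each map to the $q$-power of $v$), which are genuine ring homomorphisms agreeing on the generators $\bxx_0^{(i)}$ and on $R$, hence equal. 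As the $B$'s are $\pi$-torsion free, being polynomial algebras over the flat $\Ou$-algebra $R$, equality of the Frobenius lifts forces $h_{n+1}\circ\delta=\partial\circ h_n$; compatibility with the inclusions $u$ is immediate on generators, so $h_\ast$ is a morphism of prolongation sequences and the square commutes.
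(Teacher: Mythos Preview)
Your argument is correct, and it takes a somewhat different route from the paper's. The paper cites \cite[Lemma~2.20]{bps} for part~\ref{t.1}, declares the commutativity of the square ``immediate by definition of $h_n$'' (implicitly invoking the universal property of $J^*$ from \S\ref{s.jet}, which furnishes $h_*$ as a morphism of prolongation sequences), and then proves the isomorphism by showing the exact identity $h_n(\bz_i)=\bxx_i$ via a strong induction on pairs $(i,n)$ that feeds Lemma~\ref{l.coord1} through the already-established commutativity. By contrast, you run Lemma~\ref{l.coord1} on each side separately to extract the triangular unipotent relations $\bz_n=\bxx_0^{(n)}+\text{(lower)}$ and $\bxx_n=\partial^n\bxx_0+\text{(lower)}$, from which both \ref{t.1} and the bijectivity of $h_n$ follow by elementary inversion; and you verify commutativity independently by comparing the associated Frobenius lifts and using $\pi$-torsion freeness. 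Your approach is more self-contained (it neither cites \cite{bps} nor leans on the universal property) and makes the role of flatness of $R$ over $\Ou$ explicit; the paper's approach has the advantage of yielding the sharper statement $h_n(\bz_i)=\bxx_i$, i.e.\ that $h_n$ carries Witt coordinates to Witt coordinates on the nose, not merely up to lower-order terms.
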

 
 \begin{proof}
 The first assertion was proved in \cite[Lemma 2.20]{bps} with $\bz_i=P_i(\bx)$.
Commutativity of the squares is immediate by definition of $h_n$.
We are then left to prove that $h_n$ is an isomorphism. Note hat $J_nR[\bx_0]=R[\bz_0,\dots,\bz_n]$ by point \ref{t.1} and $B_n=R[\bx_0,\dots,\bx_n]$. If we prove that $h_n(\bz_i)=\bx_i$, for all $0\leq i\leq n$, the result is clear.

We proceed by strong induction on the subset $\{(i,n), 0\leq i\leq n\}\subset \NN^2$ totally ordered as follows:
\[(i_1,n_1)<(i_2,n_2) \quad \text{ if } \quad n_1+i_1<n_2+i_2 \quad \text{ or } \quad n_1+i_1=n_2+i_2 \ \text{ and } \ i_1<i_2 .\]
The picture below illustrates the order.

\[
\xymatrix@C=13pt@R=13pt{
(0,4)\ar[dr] & \\
(0,3)\ar[dr] & (1,3)\ar[dr] & \\
(0,2)\ar[dr] & (1,2)\ar[uul] & (2,2)\ar[uul] & \\
(0,1)\ar[u] & (1,1)\ar[uul] &  & \\
(0,0)\ar[u] & &  & }
\]

It follows immediately by definition of $h_n$ that $h_n(\bz_0)=h_n(\bx_0)=\bx_0$ and $h_n(\bz_1)=h_n(\bx_0')=\partial \bx_0=\bx_1$ for all $n\geq 0$. Hence the assertion $h_n(\bz_i)=\bx_i$ is clear for $i\leq 1$ and any $n$, in particular for the base step $(0,0)$.
Assume then $i>1$ and that $h_s(\bz_j)= \bx_j$ for any $(0,0)\leq (j,s)<(i,n)$. 
By Lemma \ref{l.coord1}, the commutativity of the above square and the induction step, we have
\begin{eqnarray*}
h_n(\bz_i)&=& h_n\left( \delta \bz_{i-1}+H_i(\bz_0,\dots,\bz_{i-2},\delta \bz_0,\dots, \delta \bz_{i-2}) \right )\\
&=& h_n(\delta \bz_{i-1})+
H_i(h_n(\bz_0),\dots,h_n(\bz_{i-2}),h_n(\delta \bz_0),\dots, h_n(\delta \bz_{i-2})) \\
&=& \partial(h_{n-1} (\bz_{i-1}))+
H_i(\bx_0,\dots,\bx_{i-2},\partial h_{n-1}( \bz_0),\dots, \partial h_{n-1}( \bz_{i-2})) \\
&=& \partial \bx_{i-1}+
H_i(\bx_0,\dots,\bx_{i-2},\partial\bx_0,\dots, \partial \bx_{i-2}) \\
&=& \bx_i .
\end{eqnarray*}
 \end{proof}

\bibliographystyle{plain}

\end{document}